\documentclass[reqno,11pt]{amsart}

\usepackage[utf8]{inputenc}
\usepackage[english]{babel}

\usepackage{verbatim} 
\usepackage[normalem]{ulem} 

\usepackage{fullpage}
\usepackage{setspace}
\usepackage{amsmath}
\usepackage{amssymb}
\usepackage{graphicx}
\usepackage{datetime}
\usepackage{enumitem}
\usepackage{mathtools}
\usepackage{array}
\usepackage{bm}
\usepackage{caption}
\usepackage{subcaption}
\usepackage{empheq}
\usepackage[mathscr]{eucal}

\newtheorem{theorem}              {Theorem}
\newtheorem{lemma}      [theorem] {Lemma}

\newtheorem{corollary}  [theorem] {Corollary}
\newtheorem{conjecture} [theorem] {Conjecture}

\theoremstyle{definition}

\newtheorem{definition} [theorem] {Definition}

\def\alabel{\upshape({\itshape \alph*\,})}

\usepackage{tikz}  
\usetikzlibrary{shapes}
\usetikzlibrary{arrows}
\usetikzlibrary{calc}
\usetikzlibrary{decorations.markings}
\usetikzlibrary{arrows.meta}
\usetikzlibrary{shapes.misc}   
\usetikzlibrary{positioning}
\usetikzlibrary{fit}
\usetikzlibrary{external} 
\tikzset{non code/.style={circle, draw=black}}
\tikzset{code/.style={circle, fill=black}}
\tikzset{vtx/.style={circle, draw=black, thick}}

\usepackage{dsfont} 
\usepackage[babel]{microtype}

\usepackage{xcolor} 
\usepackage{hyperref}

\usepackage[T1]{fontenc}

\makeatletter
\def\moverlay{\mathpalette\mov@rlay}
\def\mov@rlay#1#2{\leavevmode\vtop{   \baselineskip\z@skip \lineskiplimit-\maxdimen
   \ialign{\hfil$\m@th#1##$\hfil\cr#2\crcr}}}
\newcommand{\charfusion}[3][\mathord]{
    #1{\ifx#1\mathop\vphantom{#2}\fi
        \mathpalette\mov@rlay{#2\cr#3}
      }
    \ifx#1\mathop\expandafter\displaylimits\fi}
\makeatother

\newcommand{\quot}[2]{\mathchoice%
{\left.\raisebox{.1em}{$\displaystyle{#1}$}\kern-1pt/\raisebox{-.2em}{$\displaystyle{#2}$}\right.}
{\left.\raisebox{.1em}{${#1}$}\kern-1pt/\raisebox{-.2em}{${#2}$}\right.}
{\left.\raisebox{.1em}{$\scriptstyle{#1}$}\kern-1pt/\raisebox{-.2em}{$\scriptstyle{#2}$}\right.}
{\left.\raisebox{.1em}{$\scriptscriptstyle{#1}$}\kern-1pt/\raisebox{-.2em}{$\scriptscriptstyle{#2}$}\right.}
}

\DeclareFontFamily{U}  {MnSymbolC}{}
\DeclareSymbolFont{MnSyC}         {U}  {MnSymbolC}{m}{n}
\DeclareFontShape{U}{MnSymbolC}{m}{n}{
    <-6>  MnSymbolC5
   <6-7>  MnSymbolC6
   <7-8>  MnSymbolC7
   <8-9>  MnSymbolC8
   <9-10> MnSymbolC9
  <10-12> MnSymbolC10
  <12->   MnSymbolC12}{}
\DeclareMathSymbol{\powerset}{\mathord}{MnSyC}{180}

\let\epsilon\varepsilon

\let\hat\widehat

\usepackage{lineno}  
 \newcommand*\patchAmsMathEnvironmentForLineno[1]{%
 \expandafter\let\csname old#1\expandafter\endcsname\csname #1\endcsname
 \expandafter\let\csname oldend#1\expandafter\endcsname\csname end#1\endcsname
 \renewenvironment{#1}%
 {\linenomath\csname old#1\endcsname}%
 {\csname oldend#1\endcsname\endlinenomath}}%
 \newcommand*\patchBothAmsMathEnvironmentsForLineno[1]{%
 \patchAmsMathEnvironmentForLineno{#1}%
 \patchAmsMathEnvironmentForLineno{#1*}}%
 \AtBeginDocument{%
 \patchBothAmsMathEnvironmentsForLineno{equation}%
 \patchBothAmsMathEnvironmentsForLineno{align}%
 \patchBothAmsMathEnvironmentsForLineno{flalign}%
 \patchBothAmsMathEnvironmentsForLineno{alignat}%
 \patchBothAmsMathEnvironmentsForLineno{gather}%
 \patchBothAmsMathEnvironmentsForLineno{multline}%
 }

\allowdisplaybreaks[2]

\newcommand{\lds}{\textup{\textsc{lds}}}
\newcommand{\dist}{\textrm{dist}}
\newcommand{\MinDen}{\textup{\textsc{Min-density}}}

\begin{document}
\onehalfspace
\footskip=28pt

\title{Optimal and quasi-optimal locating-dominating densities in the infinite hexagonal
    grid with a finite number of rows}

\author{Arthur C. Gomes}
\address{Institute of Mathematics, Statistics and Computer Science - 
    University of S\~ao Paulo, Brazil} 
\email{arthurcgomes@ime.usp.br}

\author{Yoshiko Wakabayashi}
\address{Institute of Mathematics, Statistics and Computer Science -
    University of S\~ao Paulo, Brazil}
\email{yw@ime.usp.br}

\begin{abstract}
    A set of vertices $S$ of a graph $G$ is locating-dominating if $S$
     is dominating and, for each pair of distinct vertices not in
     $S$, their neighborhoods in $S$ are distinct.
    We present results on the minimum density of such sets in the
    infinite hexagonal grid with a finite number of rows~$k$, also
    known as the hexagonal strip of width $k$, which we denote by
    $H_k$.  For each $k\geq 2$, we present either an optimal solution
    or a quasi-optimal solution for $H_k$ that is within $1.3\%$ of
    the optimum.  
    We describe an exact exponential-time algorithm for fixed k, which
    we implemented to find optimal solutions for~$k \leq 5$.
    As the infinite grid $H_{k}$ always admits a periodic optimal
    solution, to deal with larger values of $k$, we present an integer
    linear program that finds an optimal periodic solution for~$H_{k}$
    for each fixed period.  This program yields high-quality feasible
    solutions for~$H_7$ and~$H_8$, which we then combine with an
    optimal solution for $H_3$ to obtain quasi-optimal solutions for
    all~$k\geq 6$. All these solutions admit a very short description.

    \medskip
    
   \noindent {\textsc{Keywords:} locating-dominating set, infinite graph, 
           hexagonal grid, density, configuration digraph, integer linear program}
\end{abstract}


\shortdate
\yyyymmdddate
\def\today{\number\year/\number\month/\number\day}
\settimeformat{ampmtime}
\date{\today, \currenttime}


\maketitle

\section{Introduction}
\label{sec:introduction}

Let $G = (V, E)$ be a connected graph, and let $\text{dist}(u, v)$
denote the distance between two vertices~$u, v \in V$. We say that
vertices at distance~1 are \emph{neighbors}. The \emph{(open)
neighborhood} of a vertex $v \in V$, denoted $N(v)$, is defined as
$N(v) \coloneqq \{u \in V : \text{dist}(v, u) = 1\}$, while the
\emph{closed neighborhood} of~$v$, denoted $N[v]$, is defined as
$N[v] \coloneqq N(v) \cup \{v\}$.  A \emph{dominating set} of
$G$ is a set~$S \subseteq V$ such that each vertex $u \in V \setminus S$
has a neighbor in $S$. A dominating set $S$ of $G$ is a
\emph{locating-dominating set} (\lds{}, for short) if $S$ 
has the additional \emph{locating property}: each
vertex $v \in V \setminus S$ can be uniquely distinguished by its
neighborhood in $S$.  Formally, this means that for each pair of
distinct vertices $u, v \in V \setminus S$, we must have
$N(u) \cap S \neq N(v) \cap S$.  Locating-dominating sets were
introduced by Slater~\cite{Slater75,Slater87}. More recently, this concept and
several of its close variants have been studied on regular infinite
grids. Here, we focus on the problem of finding minimum-density
locating-dominating sets in infinite hexagonal grids of finite height
(also known as infinite hexagonal strips of finite width).

Beyond their combinatorial interest, locating-dominating sets can be used 
to model fault diagnosis in multiprocessor networks and intrusion detection in 
buildings. 
For the latter, the vertices of a graph may represent rooms while the edges
represent adjacencies between them. 
Each detector distinguishes whether an intruder is located in its own room or in 
one of the neighboring rooms. 
The properties of an \lds{} guarantee that, from the collection of detector 
reports, the location of any single intruder can be uniquely determined.
Infinite regular grids provide a natural setting for studying the asymptotic 
behavior of locating-dominating sets. 
In particular, the finite-height grids may be viewed as idealized models of long, 
narrow networks.

Locating-dominating sets have been well-studied for finite graphs,
where the primary objective is to find an {\lds} of minimum
cardinality.
This problem is known to be NP-hard, even when restricted to specific
graph classes such as bipartite graphs~\cite{CharonHL03}, interval
graphs~\cite{FoucaudMNPV16}, and subcubic planar bipartite
graphs~\cite{Foucaud15}. Moreover, it is log-APX-hard for general
finite graphs and remains so for several graph
classes~\cite{Suomela07, Foucaud15}. In contrast, research for
infinite graphs has focused  predominantly on regular grids or strips of
finite width, where the goal generalizes to finding an {\lds} of
minimum density.  Recently, it has been proved that this 
 problem is NP-hard on infinite $\mathbb{Z}$-periodic
  graphs with a finite period~\cite{GomesW26b}.
  
To define density of a set on an infinite graph, let us first
establish some notation. For other concepts not defined here, the
reader is referred to~\cite{BondyM08}.
  
Let $v$ be a vertex in a graph $G=(V,E)$, and  $r \geq 1$, a natural number.
The \emph{$r$-open neighborhood} of $v$ in $G$ is defined as the set
$N_r(v) \coloneqq \{w \in V(G) : 0 < \dist(v, w) \leq r\}$.

The \emph{density} of a set $S$ in $V$, denoted
$d(S, G)$, is defined as
\vspace{-2mm}
\begin{align*}
    d(S, G) \coloneqq \inf \{d_w(S, G) : w \in V\}, \;\; \textrm{where} ~~ d_w(S, G) 
    \coloneqq \limsup_{r\to\infty} |N_r(w) \cap S| / |N_r(w)|.
\end{align*}

  The \emph{minimum density of an \lds{}} of a graph $G$, denoted
  $d^*(G)$, is defined as
  \[
   d^*(G) \coloneqq \inf \{d(S, G) : S \textrm{ is an \lds{} of } G\}.
  \]

This work focuses on the infinite hexagonal grid of finite height. 
This grid, together with three other regular grids, defined below, is among the 
most studied infinite regular grids for which research of this nature has been done.

Let $\mathcal{G}_S$, $\mathcal{G}_T$, $\mathcal{G}_K$ and $\mathcal{G}_H$ denote, 
respectively, the infinite \emph{square}, \emph{triangular}, \emph{king} and 
\emph{hexagonal} grids (see Figure~\ref{fig:infinte_grids}).
Each of these grids has vertex set $\mathbb{Z} \times \mathbb{Z}$, and edge
sets defined as follows:




    


\begin{itemize}
    \item Square grid: $\quad E(\mathcal{G}_S) \coloneqq
        \{\{u, v\} : u - v \in \{(\pm 1, 0), (0, \pm 1)\}\}$;

    \item Triangular grid: $\quad E(\mathcal{G}_T) \coloneqq 
        E(\mathcal{G}_S) \cup \{\{u, v\} : u - v \in \{ (1, 1), (-1, -1) \} \}$;

    \item King grid: $\quad E(\mathcal{G}_K) \coloneqq
            E(\mathcal{G}_T) \cup \{\{u, v\} : u - v \in \{ (1, -1), (-1, 1) \} \};$

    \item Hexagonal grid: $\quad  E(\mathcal{G}_H) \coloneqq
      \{\{u, v\} : u = (i, j) \textrm{ and } u - v \in \{(\pm
      1, 0), (0, (-1)^{i+j-1})\}\}.$
    
  \end{itemize}


\begin{figure}[ht]
    \centering
    \begin{subfigure}{0.22\textwidth}
        \centering
        \includegraphics{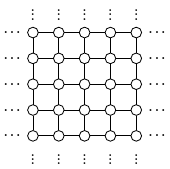}
        \caption{Square grid $\mathcal{G}_S$}
    \end{subfigure}\hfill
    \begin{subfigure}{0.22\textwidth}
        \centering
        \includegraphics{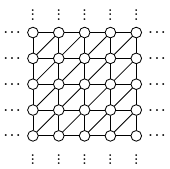}
        \caption{Triangular grid $\mathcal{G}_T$}
    \end{subfigure}\hfill
    \begin{subfigure}{0.22\textwidth}
        \centering
        \includegraphics{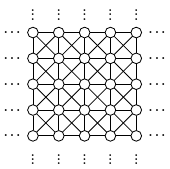}
        \caption{King grid $\mathcal{G}_K$}
    \end{subfigure}\hfill
    \begin{subfigure}{0.22\textwidth}
        \centering
        \includegraphics{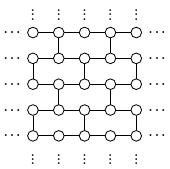}
        \caption{Hexagonal grid $\mathcal{G}_H$}
    \end{subfigure}
    \caption{Examples of regular grids: 4-regular, 6-regular, 
        8-regular and 3-regular.}
    \label{fig:infinte_grids}
\end{figure}

Slater~\cite{Slater02} proved that $d^*(\mathcal{G}_S) = 3/10$.
Honkala and Laihonen~\cite{HonkalaL06} showed that 
$d^*(\mathcal{G}_K) = 1/5$ and $d^*(\mathcal{G}_H) = 1/3$. Furthermore,
Honkala~\cite{Honkala06} showed that  $d^*(\mathcal{G}_T) = 13/57$.

Studies on infinite subgraphs of the above mentioned grids have also
been done, when the number of rows (also called height) is finite.  We
denote by $S_k$, $T_k$, $K_k$ and $H_k$ the square, triangular, king
and hexagonal grids with $k$ rows, respectively. Each of them is a
subgraph of its corresponding infinite grid induced by the vertex set
$[k] \times \mathbb{Z}$, where $[k] \coloneqq \{1, \ldots, k\}$. They
are also called \emph{strips} (of fixed width).
Table~\ref{tab:min-densities} shows the results that are known for
these grids.

\renewcommand{\thefootnote}{\fnsymbol{footnote}}
\begin{table}
    \centering
    \begin{tabular}{|c|c|c|c|c|}
        \hline
        $k$ & $S_k$ & $T_k$ & $K_k$ & $H_k$ \\
        \hline \hline
        $2$ & $3/8$~~\cite{BouznifDMP19} & $1/3$~~~\cite{BouznifDMP19}& $1/2$~~~\cite{BouznifDMP19} & $3/8$ \\
        \hline
        $3$ & $1/3$~~\cite{BouznifDMP19}& $3/10$~~\cite{BouznifDMP19}& $4/15$~~\cite{BouznifDMP19}& $1/3$ \\
        \hline
        $4$ & $1/3$~~\cite{GomesW25} & $5/18$~~\cite{GomesW25} & $1/4$~~~\cite{GomesW25} & $11/32$ \\
        \hline
        $5$ & $1/3$~~\cite{GomesW25} & $4/15$~~\cite{GomesW25} & $6/25$~~\cite{GomesW25} & $12/35$ \\
        \hline
        $6$ & $1/3$~~\cite{GomesW25} &  $11/42$\footnotemark[2]  &  $1/4$\footnotemark[2]  & $1/3$ \\
        \hline
    \end{tabular}
    \caption{Minimum densities LDSs in the grids $S_k$, $T_k$, $K_k$, ${H}_k$ 
        for $k\leq 6$.}
    \label{tab:min-densities}
\end{table}

\footnotetext[2]{These two values were computed by the authors and are included here for completeness.
Details will appear in a forthcoming paper.}
\renewcommand{\thefootnote}{\arabic{footnote}}


A concept related to \lds{} is that of an \emph{identifying code} of a
graph $G$, which is a dominating set~$C$ such that
$N[u] \cap C \neq N[v] \cap C$ for every pair of distinct vertices
$u, v \in V$.  In 2018, Jiang~\cite{Jiang18} introduced an
exponential-time algorithm to find minimum-density identifying codes
in $S_k$ using an idea based on the construction of a configuration
digraph.  Later, Bouznif, Darley, Moncel and
Preissmann~\cite{BouznifDMP19} determined, using a similar technique,
the minimum density of locating-dominating sets, identifying codes,
and locating-total-dominating sets for $S_k$, $T_k$ and $K_k$, when
$k \leq 3$ (and up to $k \leq 4$ for identifying codes in $S_k$).
Previously, Junnila~\cite{Junnila15} had determined the minimum
density of a locating-total dominating set in~$S_3$ using the
discharging method.  More recently, Sampaio, Sobral and
Wakabayashi~\cite{SampaioSW24} found minimum-density identifying codes
for the hexagonal grid $H_k$ with $k \leq 5$.  When $k = 1$, these
grids are all isomorphic to the infinite path, for which the minimum
density of an \lds{} is known to be $2/5$~\cite{BertrandCHL04}.

\smallskip

\noindent\textbf{Conventions and organization of the paper.}

\noindent In this work, when we mention a graph $G$, if its vertex set
and its edge set are not explicitly given, we adopt the convention
that they are $V(G)$ and $E(G)$, respectively.  The main problem of
this work will be referred to as the {\MinDen} {\lds} problem. Its
input consists of an infinite graph, and its goal is to find a
minimum-density {\lds}. Throughout this work we assume that 
$k$ is a  positive integer; and $H_k$ is the
infinite hexagonal grid with $k$ rows.

In \textbf{Section~\ref{sec:min-density-hex-grid}}, we present an
exact algorithm that we implemented to solve the {\MinDen} {\lds}
problem for~$H_k$, that is based on the configuration digraph
approach~\cite{Jiang18}.  We provide some implementation details and
exhibit the optimal {\lds} solutions we found for the grids $H_2$,
$H_3$, $H_4$ and $H_5$.
Next, in \textbf{Section~\ref{sec:ilp}} we present an integer linear
program that finds, for a fixed integer~$k$, an optimal periodic feasible
\lds{} solution for $H_k$ with a fixed (even) period $\ell\geq 6$.  We
exhibit the feasible solutions for $H_7$ and $H_8$ obtained with this
approach, and show that their densities are very close to the optimum.
Finally, in \textbf{Section~\ref{sec:stack-hex-grid}} we present a
constructive technique to obtain new feasible periodic solutions by
combining feasible periodic solutions.  Using this construction, we
show how to explicitly obtain an optimal or quasi-optimal {\lds}
solution for~$H_k$ for all $k\geq 6$.  Moreover, as these solutions
are periodic, their descriptions are very short.  We also
  formulate a conjecture regarding an exact formula for the minimum
  density of locating-dominating sets in $H_k$.

\section{An exact algorithm to find minimum-density locating-dominating sets in 
    \texorpdfstring{$H_k$}{Hk}}
\label{sec:min-density-hex-grid}

In this section we present an exact algorithm, called $\mathbf{A_k}$,
for every fixed $k\geq 2$, 
for the \MinDen{} \lds{} problem in $H_k$.
This algorithm uses an approach based on the construction of a
\emph{configuration digraph} that contains all feasible periodic
\lds{} solutions. This approach is very general and can be
  adapted to many problems of similar nature.  As we may choose how to
define the vertices and arcs of the configuration digraph, after
testing some different choices, we fixed those that we found to be
more appropriate. Our description corresponds to these choices.

We included the complete description of the algorithm to make
  the paper self-contained, and also to prove the statement regarding
  the relation of an optimal periodic {\lds} solution with the
  minimization problem defined on the configuration digraph.

Let $\ell\geq 4$ be a natural number. We define an \emph{$\ell$-bar} of the hexagonal
grid $H_k$ with $k \geq 1$ rows as the subgraph induced by the vertices in the set
$[k] \times \{j_1, \ldots, j_{\ell}\}$, where $[k] \coloneqq \{1, 2, \ldots, k\}$, and
$j_1, \ldots, j_{\ell}$ represent $\ell$ consecutive columns of $H_k$.

Let $R$ be an $\ell$-bar of $H_k$, and let $S \subseteq V(R)$.
We say that $S$ is a \emph{barcode} of $R$ if $S$ distinguishes the vertices 
in the ``interior'' of the $\ell$-bar~$R$. That is, $S$ is a barcode if the 
$(\ell-2)$-bar, say $R'$, indexed by the columns $2$ through $\ell - 1$ has
the following properties: 
\begin{enumerate}[label=(\roman*)]
    \item for each vertex $v$ in $V(R') \setminus S$, we have $N(v) \cap S \neq \emptyset$; and 
    \item for each pair of distinct vertices $u$, $v$ in $V(R') \setminus S$, 
        we have $N(u) \cap S \neq N(v) \cap S$. 
\end{enumerate}

In other words, the subset $S$ satisfies the properties of an {\lds} for all vertices
located in the interior columns of the $\ell$-bar $R$. {Although a barcode $S$ is a
subset of vertices of an $\ell$-bar $R$, for simplicity, we may refer to the ``columns of
the barcode $S$'', with the understanding that we refer to the
columns of~$R$ on which~$S$ is defined.

We define the weighted \textbf{\lds{}-configuration digraph} $(G_k, w)$ 
in the following way.

\begin{enumerate}[label=(\roman*)]
    \item \textbf{Vertices:} each vertex $v_B$ of $G_k$ corresponds to a distinct
        barcode $B$ of a $4$-bar.

      \item \textbf{Arcs:} for each pair of two distinct vertices
        $v_B$ and $v_{B'}$, there is an arc from $v_B$ to $v_{B'}$ if
        the \emph{last two columns} of barcode $B$ are identical to
        the \emph{first two columns} of barcode~$B'$, and the overlap
        of the two identical columns of~$B$ and~$B'$ forms a barcode
        of the resulting $6$-bar.

    \item \textbf{Weights:} the weight $w_a$  of an arc $a=(v_B, v_{B'})$ is 
        equal to the number of vertices in $S$ in the \emph{last two columns} 
        of the barcode $B'$.
\end{enumerate}

To understand better the idea behind the construction of the
digraph $G_k$, let us call attention to the meaning of the arcs of
this digraph and convince ourselves that $G_k$ captures all feasible
periodic {\lds}s of $H_k$. See Figure~\ref{fig:cycle-Gk}. 
When we add an arc in $G_k$ linking two
vertices $v_B$ and $v_{B'}$ of~$G_k$ (when the overlap of the two
sets of two columns defined in (ii) happens), we mean that these two
overlapped columns may be part of a feasible {\lds} of $H_k$. Thus,
in $G_k$, a directed path with~$t$ arcs indicates that the sequence
of the $2t$ overlapped columns corresponding to the $t$ arcs of this
path may also be part of a feasible {\lds} of~$H_k$. If, in addition
to such a directed path, $G_k$ has an arc linking the end vertex of
this path to its initial vertex, we get a directed cycle (with $t+1$
arcs); moreover, this means that if we repeat the $2t$ columns side
by side, we get a periodic {\lds} of $H_k$ (with period $2t$).  The
converse also holds: whenever we have a periodic {\lds} of $H_k$
with period $2t$, we also have a cycle in $G_k$ with $t$ arcs. We
prove these and further assertions in the next theorem.

Henceforth, a cycle in $G_k$ means a directed cycle (and it is
represented by its sequence of vertices, without repetitions). For a
cycle $C$, the \emph{mean weight} of $C$, or simply, \emph{mean} of
$C$, is defined as the sum of the weights of the arcs in $C$ ($w(C)$)
divided by the number of arcs in $C$. The relation between a 
 minimum mean cycle of $(G_k, w)$  and a minimum-density {\lds} of 
 $H_k$ is explained in what follows.

 \begin{figure}
    \includegraphics{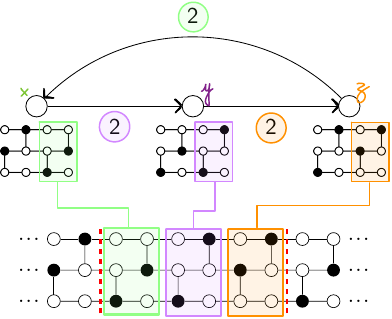}
    \caption{Example of a cycle in the configuration digraph $G_3$.}
    \label{fig:cycle-Gk}
 \end{figure}

 \medskip

 The algorithm $\mathbf{A_k}$
 \begin{enumerate}
   \item Construct the  weighted \lds{}-configuration digraph
     $(G_k, w)$;
   \item Find a minimum mean cycle $C$ in $(G_k, w)$;
   \item Return the {\lds} solution of $H_k$  that corresponds to $C$.
   \end{enumerate}

   More details on finding a mean cycle $C$ in $(G_k, w)$ is given
   later, and how to build from $C$ an {\lds} of $H_k$ is mentioned in
   Theorem~\ref{thm:min_density}.

   \medskip

\noindent \textbf{Periodic solutions and patterns.}\\
Given a feasible {\lds} $S$ of $H_k$, we say that $S$ is a
\emph{periodic solution} of $H_k$ if there is a natural number~$p$
such that we may partition $H_k$ into $p$-bars, such that the
subgraphs induced by $S$ in these $p$-bars are all identical, that is,
the solution $S$ can be obtained by repeating side by side (without
overlap) the solution in one of the $p$-bars.  We use the term
\emph{pattern} or \emph{periodic pattern} to refer to one of the
$p$-bars and the solution defined by $S$ on the corresponding $p$-bar.
In this case, we say that the solution $S$ has a pattern of
\emph{period} $p$.  Conversely, if $B$ is a $p$-bar of $H_k$ together
with some set of selected vertices on it, then we may also say that
$B$ is a pattern of $H_k$ if an infinite repetition of~$B$ side by
side yields a feasible periodic {\lds} of $H_k$.  Note that, for a
fixed periodic solution $S$ and a fixed~$p$, there may be different
ways to partition $H_k$ into $p$-bars, and each different partition
may define a different pattern. We say that a pattern is
\emph{optimal} if its repetition yields an {\lds} of minimum density.
In the forthcoming figures showing periodic solutions, the $p$-bar
corresponding to a pattern is indicated between two consecutive red
vertical dashed lines.


\medskip

\noindent\textbf{Remark.} Because of the structure of the hexagonal grid,
  and because in our implementation the vertices of $G_k$ correspond
  to barcodes of $4$-bars, and the arcs of $G_k$ correspond to an
  overlap of two columns, in $G_k$ we may only find periodic {\lds}s
  with even period.  However, we should note that this does not
  exclude any possible minimum density solution, since for any
  odd-length pattern, we can have an even-length pattern formed by
  taking two copies of such an odd-length pattern.  We could have
  defined that the vertices of $G_k$ are barcodes of $\ell$-bar with
  $\ell > 4$. From our experience, the value $\ell=4$ seemed simpler
  and more appropriate. We note that we allow loops (cycles of
  length~$1$) in $G_k$.

  \medskip

  The next theorem establishes a relationship between mean weight of
  cycles of the configuration digraph $(G_k, w)$ and densities of
  periodic locating-dominating sets of $H_k$. This theorem is
  important to guarantee that the solution found by the algorithm on
  the configuration digraph indeed finds an optimal {\lds} solution.

  As we refer to periodic solutions $S$ in $H_k$, and the density of
  $S$ in $H_k$, the following concept is important.




\begin{definition}
  An infinite connected graph $\hat{G}$ has the \emph{Slow Growth property}
  if it has a vertex $s$ such that
  $ \lim_{r \to \infty} {|N_{r+1}[s]|}/{|N_{r}[s]|} = 1$.
\end{definition}

As $H_k$ has the Slow Growth property, the following result proved by
Sampaio et al.~\cite{SampaioSW24} will be useful.

\begin{lemma}[Sampaio et al.~\cite{SampaioSW24}] \label{lem:SG-property} 
    Let $\hat{G}$ be an infinite connected graph with bounded maximum
    degree that satisfies the Slow Growth property. Let $\ell, c, c', \delta$ be
    positive integers, and let $S$ be a subset of
    $V(\hat{G})$. If $\hat{G}$ can be
    partitioned into finite sets $V_1,V_2, \ldots$  of size $\ell$ such that
    $c\leq |V_i\cap S|\leq c'$ for $i\in \mathbb{Z}$, and the distance between
    any two vertices of $V_i$ is at most $\delta$, then the density of $S$
    in $\hat{G}$  satisfies $c/\ell \leq  d(S,\hat{G}) \leq c'/\ell$.
\end{lemma}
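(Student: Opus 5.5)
We prove the lemma by comparing the ball $N_r(w)$ with a union of whole parts $V_i$: the parts lying entirely inside the ball and the parts merely meeting it. The Slow Growth property makes the difference between these two unions negligible relative to $|N_r(w)|$.

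Fix a vertex $w$ and a radius $r$. Let $I_r$ be the set of indices $i$ with $V_i \subseteq N_r[w]$, and let $J_r$ be the set of indices with $V_i \cap N_r[w] \neq \emptyset$. Because each $V_i$ has diameter at most $\delta$, every part meeting $N_r[w]$ lies inside $N_{r+\delta}[w]$. Likewise, every vertex of $N_{r-\delta}[w]$ lies in a part contained in $N_r[w]$. Hence
\[
\bigcup_{i\in I_r} V_i \subseteq N_r[w] \subseteq \bigcup_{i\in J_r} V_i,
\qquad
\ell|I_r| \ge |N_{r-\delta}[w]|,
\qquad
\ell|J_r| \le |N_{r+\delta}[w]|.
\]

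Counting elements of $S$ part by part gives
\[
c|I_r| \le |N_r[w]\cap S| \le c'|J_r|.
\]
Dividing by $|N_r[w]|$ yields
\[
\frac{c}{\ell}\cdot\frac{|N_{r-\delta}[w]|}{|N_r[w]|}
\;\le\; \frac{|N_r[w]\cap S|}{|N_r[w]|}
\;\le\; \frac{c'}{\ell}\cdot\frac{|N_{r+\delta}[w]|}{|N_r[w]|}.
\]
The closed versus open neighbourhood changes each count by at most one vertex, which is negligible since the ball sizes tend to infinity ($\hat{G}$ is infinite and connected).

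The main step is showing that both ratios tend to $1$ for every vertex $w$, not only for the special vertex $s$. Let $t=\dist(s,w)$. Then
\[
N_{r-t}[s]\subseteq N_r[w]\subseteq N_{r+t}[s].
\]
Applying this to the radii $r\pm\delta$ and $r$, each ratio is sandwiched between quotients of the form $|N_{r+a}[s]|/|N_{r+b}[s]|$ with $|a-b|\le 2t+\delta$, a fixed bound. Each such quotient is a telescoping product of at most $2t+\delta$ consecutive ratios $|N_{m+1}[s]|/|N_m[s]|$. By the Slow Growth property every factor tends to $1$, so each product does too.

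Taking $\limsup$ then gives $c/\ell \le d_w(S,\hat G)\le c'/\ell$ for every vertex $w$. Since the infimum over $w$ of values all lying in $[c/\ell, c'/\ell]$ stays in that interval, $c/\ell \le d(S,\hat G) \le c'/\ell$. Bounded degree is used only to ensure the balls are finite, so that the ratios make sense.
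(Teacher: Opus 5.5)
Your proof is correct and complete. The sandwich $\bigcup_{i\in I_r}V_i\subseteq N_r[w]\subseteq\bigcup_{i\in J_r}V_i$ together with the diameter bound $\delta$ gives the two counting inequalities. The closed-versus-open correction is harmless because $|N_r[w]|\to\infty$. The step you identify as the main one, transferring the Slow Growth property from $s$ to an arbitrary $w$, is handled soundly: you use $N_{r-t}[s]\subseteq N_r[w]\subseteq N_{r+t}[s]$ and a telescoping product with a bounded number of factors, each tending to $1$.

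The paper gives no proof of this lemma; it simply imports it from Sampaio et al. So there is no in-paper argument to compare with, and your sandwich argument serves as a self-contained justification.

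Two small remarks. First, your lower bound holds for $\liminf$ as well as $\limsup$. So when $c=c'$, which is the case invoked in the proof of Theorem~\ref{thm:min_density} with $c=c'=w(C)$ and parts of size $2pk$, the ratio $|N_r(w)\cap S|/|N_r(w)|$ actually converges to $c/\ell$ at every vertex $w$. Second, bounded degree is stronger than you need: local finiteness suffices. It is already implicit in the Slow Growth property, since finiteness of all balls around $s$ forces every vertex to have finite degree and every ball around any $w$ to be finite.
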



\begin{theorem}
\label{thm:min_density}
Let $H_k$ be the infinite hexagonal grid with $k\geq 2$ rows, and let
$(G_k, w)$ be the weighted \lds{}-configuration digraph as defined previously. 
Then the following holds. 
  \begin{enumerate}[noitemsep,topsep=0.5mm,label=\alabel]
  \item The digraph $G_k$ has a cycle $C$ of length~$p\geq 1$ if, and
    only if, $H_k$ has a periodic \lds{} $S$ with a pattern $P$ of
    period $2p$.  Furthermore, the density of $S$ in $H_k$ and the
    mean of cycle $C$, $\widetilde{w}(C)$, satisfy the relation
    $d(S,H_k)=\widetilde{w}(C)/2k$. Thus, a minimum mean cycle in
    $G_k$ yields a minimum-density periodic \lds{} of $H_k$.

  \item As $H_k$ always admits an optimal \lds{} solution that is
    periodic, we have that $d^*(H_k) = \widetilde{w}(C^*)/2k$, where
    $C^*$ is a minimum mean cycle in $G_k$.
\end{enumerate} 
\end{theorem}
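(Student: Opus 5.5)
For part (a), forward direction, let $C=(v_{B_0},v_{B_1},\dots,v_{B_{p-1}})$ be a cycle of length $p$ in $G_k$, with indices taken mod $p$. Each arc $(v_{B_i},v_{B_{i+1}})$ says that the last two columns of $B_i$ coincide with the first two columns of $B_{i+1}$. So we can glue the barcodes into a bi-infinite sequence of columns of $H_k$. Placing $B_i$ on columns $2i+1,\dots,2i+4$ for every $i\in\mathbb{Z}$ is consistent, and it defines a set $S\subseteq V(H_k)$ that repeats every $2p$ columns.

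One parity point must be checked. The edge set of $\mathcal{G}_H$ depends on $(-1)^{i+j-1}$, so shifting by an even number of columns is a graph automorphism of $H_k$. This is exactly why even periods (and 2-column overlaps) are used. Hence each placed copy of a barcode has the same local structure wherever it sits.

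Next we show $S$ is an \lds{}. Every vertex $v$ of $H_k$ lies in some column $c$, and $c$ is an interior column of some arc's $6$-bar $B_i\cup B_{i+1}$ (interior meaning columns $2$ to $5$). Since $N(v)$ is contained in columns $c-1,c,c+1$, the barcode condition on that $6$-bar gives that $v$ is dominated if $v\notin S$. For the locating property, take $u,v\notin S$ with $N(u)\cap S=N(v)\cap S\neq\emptyset$. Then $u$ and $v$ share a neighbour in $S$, so $\dist(u,v)\le 2$, and their columns differ by at most $2$.

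This is the step I expect to be the main obstacle: I must show that any two such vertices lie together in the interior of a single $6$-bar of the form $B_i\cup B_{i+1}$. Interior windows have width $4$ and start at every even offset, so any two columns $c\le c'\le c+2$ fit in the interior of one of them. I will check this carefully for both column parities, using the fact that in the hexagonal grid the neighbours of a vertex span only $3$ consecutive columns.

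For density, partition $H_k$ into the $2k$-element sets formed by pairs of consecutive columns $\{2i+3,2i+4\}$, i.e.\ the last two columns of $B_{i+1}$. The number of vertices of $S$ in this set is exactly the weight of the arc $(v_{B_i},v_{B_{i+1}})$. To apply Lemma~\ref{lem:SG-property} with $c=c'$, I group $p$ consecutive such sets into one block $V_j$ of size $\ell=2kp$ and bounded diameter; each block then contains exactly $w(C)$ vertices of $S$. Since $H_k$ has the Slow Growth property, the lemma gives $d(S,H_k)=w(C)/(2kp)=\widetilde{w}(C)/2k$.

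Conversely, let $S$ be a periodic \lds{} with pattern of period $2p$; if the period is odd, double it, as in the Remark. Align so the columns are numbered with $S$ invariant under shifting by $2p$. The restriction of $S$ to columns $2i+1,\dots,2i+4$ is a barcode $B_i$ (the \lds{} property restricts), and $B_i\cup B_{i+1}$ is a barcode of the $6$-bar. Thus consecutive $B_i$ are joined by arcs, and $B_{i+p}=B_i$. The resulting closed walk of length $p$ decomposes into cycles. If the walk is not a simple cycle, I still get a cycle, but of length dividing into the walk, so the ``only if'' statement must be read as a closed walk of length $p$. For the density statement, the mean of the closed walk is an average of cycle means, so some cycle has mean at most $\widetilde{w}$ of the walk. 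This suffices for ``a minimum mean cycle yields a minimum-density periodic \lds{}''.

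For part (b), I take as given (the statement asserts it, presumably proved later or cited) that $H_k$ admits a periodic optimal \lds{}. Combining this with (a) gives $d^*(H_k)=\min_C\widetilde{w}(C)/2k=\widetilde{w}(C^*)/2k$. The minimum over cycles exists because $G_k$ is finite, having at most $2^{4k}$ vertices.
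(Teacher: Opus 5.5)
Your proposal is correct and follows essentially the paper's route. You glue the barcodes of the cycle into a $2p$-periodic set and get $d(S,H_k)=w(C)/(2pk)$ from the arc weights via Lemma~\ref{lem:SG-property}, you cut a $2p$-periodic \lds{} into $4$-bars at even offsets for the converse, and, like the paper (which defers to Jiang), you take the existence of a periodic optimal solution as given for (b). You are more careful than the paper in two places: the paper only asserts that the glued bar is ``clearly a barcode,'' and it tacitly calls the resulting sequence of barcodes a cycle, whereas you check the \lds{} property through the width-$4$ interior windows and correctly observe that one only gets a closed walk. That walk decomposes into cycles whose lengths sum to $p$ (not ``divide'' it, as you wrote), and one of them has mean at most the walk's mean, which is all the density conclusion needs.
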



\begin{proof}
    Let $H_k$, $(G_k,w)$ and $k$ be as in the statement of the theorem.
    Let $\mathcal{C} = (v_{B_1}, \ldots, v_{B_p})$ be a cycle of length
    $p$ in $G_k$. Then, each $B_i$ is a barcode of a $4$-bar whose last
    two columns is identical to the first two columns of $B_{i+1}$, for
    $i =1,2,\ldots, p-1$. Since $C$ is a cycle,  the last two columns of
    the barcode $B_p$ is identical to the two first columns of the
    barcode $B_1$.

    Let $\mathcal{B} = B_1B_2\ldots B_pB_{p+1}$ be the $(2p+4$)-bar obtained
    by overlapping the last 2 columns of $B_j$ with the first 2
    columns of $B_{j+1}$, for $j=1,2,\ldots,p$, and where
    $B_{p+1}\equiv B_1$. Then $\mathcal{B}$ is clearly a
    barcode. Let~$P$ be the barcode consisting of the central $2p$
    columns of the barcode~$\mathcal{B}$, and let $S$ be a periodic
    solution of $H_k$ that is obtained by repeating side by side the
    barcode~$P$. Thus, by the definition of the weight of the arcs in
    $G_k$, we have that $w(C)$, the weight of $C$, is precisely the
    number of vertices of $S$ in the barcode~$P$.  Hence, by
      Lemma~\ref{lem:SG-property}, the density $d(S,H_k)$ of the
    periodic solution $S$ in $H_k$ is $w(C)$ divided by~$2pk$, the
    number of vertices in~$P$. Thus, 
    $$d(S,H_k) = w(C)/(2pk) = (w(C)/p)(1/2k) = \widetilde{w}(C)/2k.$$


    Now, given a periodic {\lds} solution $S$ of $H_k$, let us prove
    the existence of a cycle $C$ in $G_k$, as desired.  Suppose $S$
    has a pattern~$P$ with period $2p$.  Let us assume, without loss 
    of generality, that $P$ corresponds to a $2p$-bar $B$ of $H_k$ 
    whose columns are numbered $1, 2, \ldots, 2p$. 
    Let $B'$ be the $(2p+4)$-bar of $H_k$ whose
    columns are numbered $-1, 0, 1, 2, \ldots, 2p+1, 2p+2$. Since $P$
    is a pattern, the columns~$-1$ and~$0$ (resp. columns 1 and 2) of
    $P$ are identical to the columns $2p-1$ and $2p$ (resp. $2p+1$ and
    $2p+2$). Thus, $G_k$ has a cycle
    $C = (v_{B_1}, v_{B_2}, \ldots, v_{B_{p}})$, where $B_1$ is the
    barcode of the first 4-bar of $B'$ (that starts at column $-1$);
    and for $j=2,\ldots,p$ we have that $B_j$ is the barcode of the
    4-bar of $B'$ that starts two columns after the previous barcode
    $B_{j-1}$. Based on the  analysis done in the previous case, we
    conclude that cycle $C$ generates the pattern $P$ and $w(C)$ is
    the number of vertices of $S$ in the pattern~$P$. Thus,
    $d(S,H_k)= w(C)/2pk= \widetilde{w}(C)/2k$.
    Since $G_k$ has a positive outdegree at every vertex, it follows
    that $G_k$ has a cycle. This fact, together with the previous
    results, implies that a minimum mean cycle in $G_k$ yields a
    minimum-density periodic {\lds} of $H_k$.

    It  remains to prove that $H_k$ always admits an optimal {\lds}
    solution that is periodic. We omit this proof, and refer the
    reader to Jiang~\cite{Jiang18}, who proved that the square grid
    $S_k$ always admits an optimal identifying code that is
    periodic. The proof for {\lds} in $H_k$ can be done analogously,
    as $H_k$ has the Slow Growth property, and a superset of an {\lds}
    is also an {\lds}. Thus, $H_k$ always has an optimal {\lds}
    solution that is periodic, and whose period has length at most the
    order of $G_k$. Therefore, such a solution corresponds to a
    minimum mean cycle in $G_k$.
\end{proof}

\subsection{Our implementation of the algorithm
  \texorpdfstring{$A_k$}{Ak} and optimal  solutions for \texorpdfstring{$H_2$}{H2},
    \texorpdfstring{$H_3$}{H3}, \texorpdfstring{$H_4$}{H4},  \texorpdfstring{$H_5$}.}

  To find a minimum-density \lds{} in $H_k$, for $k\in\{2,3,4,5\}$, we
  implemented the algorithm $A_k$ in C++.
%
We used the LEMON library for the graph data
structure and a minimum mean cycle algorithm.  Specifically, rather
than using Karp's~\cite{Karp78} or Hartmann and
Orlin's~\cite{HartmannO93} algorithms, we used LEMON's implementation
of Howard's policy iteration algorithm.  Although originally designed
for finding optimal policies in \emph{Markov Decision Processes}, this
algorithm can be adapted to find minimum mean cycles in
digraphs~\cite{DasdanG98}.  While the best-known theoretical bounds
for Howard's algorithm are exponential, in practice it proved to be
significantly faster than the other two (polynomial-time) algorithms.

The code was compiled using g++ with the -O2 optimization flag and was executed 
on a high-performance computing (HPC) cluster. The executions used 20 cores of
an Intel(R) Xeon(R) CPU E7-2870 @ 2.40GHz with 480 GB of RAM available.

Table~\ref{tab:time_spent} shows the size of the configuration digraph
and the execution time for $k \in \{2, 3, 4, 5\}$.  The number of
vertices (resp. arcs) in $G_k$ is at most $2^{4k}$ (resp. $2^{6k}$).
The results obtained with our implementation are illustrated in
Figures~\ref{fig:H2-H3}, \ref{fig:H4} and~\ref{fig:H5}.

As the size of the digraph $G_k$ grows exponentially with $k$, with
the computational resource available to us we were able to find
optimal solutions for $H_k$ only when $k\leq 5$.  In the next section
we present an approach to find feasible solutions for $H_k$ for
larger values of $k$ (and comment on the special case in which
$k$ is a multiple of~$3$).

\begin{table}
    \centering
    \renewcommand{\arraystretch}{1.15}
    \begin{tabular}{|c|r|r|r|r|r|}
        \hline
        \multicolumn{3}{|c|}{Size of $G_k$} &
        \multicolumn{3}{c|}{Running Time} \\
        \hline
        \textbf{$k$} &
        \multicolumn{1}{c|}{Vertices} &
        \multicolumn{1}{c|}{Arcs} &
        \multicolumn{1}{c|}{Construction} &
        \multicolumn{1}{c|}{Howard's Alg.} &
        \multicolumn{1}{c|}{Total} \\
        \hline\hline
        2 & 181 & 242 & 0.0016\,s  & 0.0001\,s & 0.0018\,s \\
        \hline
        3 & 2581 & 104218 & 0.270\,s & 0.011\,s & 0.281\,s \\
        \hline
        4 & 36917 & $\approx 5.4 \times 10^6$ & 41.38\,s & 0.95\,s & 42.33\,s \\
        \hline
        5 & 528005 & $\approx 2.76 \times 10^8$& 8711.49\,s & 98.39\,s & 8809.88\,s \\
        \hline
    \end{tabular}
    \caption{Size of the digraph $G_k$ and running time of the algorithm $A_k$.}
    \label{tab:time_spent}
\end{table}

\begin{theorem}
  The locating-dominating sets shown in Figures~\ref{fig:H2-H3}, \ref{fig:H4}
  and~\ref{fig:H5} are of minimum density in their corresponding grids. The optimal
  densities are $d^*(H_2) = 3/8 = 0.375$, $d^*(H_3) = 1/3$, $d^*(H_4) = 11/32 = 0.34375$
  and $d^*(H_5) = 12/35 \approx 0.3428$. 
\end{theorem}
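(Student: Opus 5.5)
The proof is computer-assisted and rests on Theorem~\ref{thm:min_density}. By part (b), for each $k\in\{2,3,4,5\}$ we have $d^*(H_k)=\widetilde{w}(C^*)/2k$, where $C^*$ is a minimum mean cycle of the weighted \lds{}-configuration digraph $(G_k,w)$. The plan is to run algorithm $\mathbf{A_k}$ for these four values of $k$, read off $\widetilde{w}(C^*)$, and check that it equals $2k$ times the claimed density:
\[
\widetilde{w}(C^*)=\tfrac{3}{2},\quad 2,\quad \tfrac{11}{4},\quad \tfrac{24}{7}
\quad\text{for } k=2,3,4,5 \text{ respectively.}
\]
The periodic solutions in Figures~\ref{fig:H2-H3}, \ref{fig:H4} and~\ref{fig:H5} are the ones produced from $C^*$ as in part (a).

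The first step is to build $G_k$ exactly as defined.
\begin{enumerate}[label=(\roman*)]
\item Enumerate all $2^{4k}$ subsets of a $4$-bar. Keep those satisfying the barcode conditions on the two interior columns: every interior non-code vertex has a code neighbour, and interior non-code vertices have pairwise distinct code neighbourhoods.
\item For each ordered pair of kept barcodes whose last/first two columns agree, test whether the glued $6$-bar is a barcode. Only the two new interior columns need checking.
\item Weight each arc by the number of code vertices in the last two columns of its head.
\end{enumerate}
Then run Howard's algorithm, or for certification Karp's algorithm, to get a minimum mean cycle. The vertex and arc counts in Table~\ref{tab:time_spent} serve as a consistency check.

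Besides the computation, I would verify the optimal patterns independently, because this is cheap and human-checkable. For each figure, take the pattern of period $2p$, repeat it, and check domination and the locating property on every vertex of one period, using wrap-around neighbours. Then count code vertices and apply Lemma~\ref{lem:SG-property} to get the stated density. The periods are readable from the denominators: $3/8$ with $2k=4$ gives weight $3$ per two columns, so a single arc (a loop) suffices; $11/32$ needs $p=4$; $12/35$ needs $p=7$. This confirms the upper bounds $d^*(H_k)\le$ the stated values.

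The main obstacle is the lower bound. It rests entirely on the correctness and completeness of the exhaustive computation. First, no feasible barcode or arc may be omitted. In particular, the boundary rows $1$ and $k$ have degree two in $H_k$, and the vertical edge parity $(-1)^{i+j-1}$ must be handled consistently when columns are shifted by two. Second, the reported mean must be exact. To address this I would compute the minimum mean exactly with rational arithmetic. I would also extract a certificate, namely a potential function $\pi$ on $V(G_k)$ with $w(a)+\pi(u)-\pi(v)\ge \widetilde{w}(C^*)$ for every arc $a=(u,v)$. This certificate can be checked arc by arc by an independent program, and it proves that no cycle has smaller mean. For $k=5$, with about $2.76\times 10^8$ arcs, this certification pass is the costly step, but it is linear in the number of arcs.
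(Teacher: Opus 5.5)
Your proposal follows the paper's own argument: the optimality claim is exactly the output of algorithm $\mathbf{A_k}$ (the paper runs LEMON's implementation of Howard's algorithm on $G_k$) combined with Theorem~\ref{thm:min_density}(b), and your exact rational arithmetic and potential certificate $w(a)+\pi(u)-\pi(v)\ge \widetilde{w}(C^*)$ add a verifiability that the paper does not supply. One slip in your side check: for $k=2$ you get $2k\cdot\frac{3}{8}=\frac{3}{2}$ code vertices per two columns (as in your own display), not $3$, so $C^*$ cannot be a loop, since a loop has integer mean; the cycle length $p$ must be even, so the period of the $H_2$ pattern is a multiple of $4$.
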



\begin{figure}[ht]
    \centering
    \begin{minipage}[b]{0.5\linewidth}
        \centering
            \scalebox{1}{\includegraphics{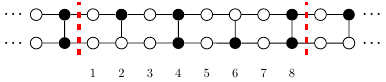}}
    \end{minipage}
    \begin{minipage}[b]{0.5\linewidth}
        \centering
            \scalebox{1}{\includegraphics{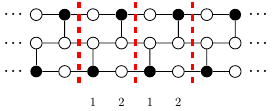}}
    \end{minipage}
    \caption{Optimal periodic solutions $\mathscr{S}_2$ and $\mathscr{S}_3$
            for the grids $H_2$ and $H_3$, with densities $d^*(H_2) = 3/8 = 0.375$ 
            and $d^*(H_3) = 1/3$.}
    \label{fig:H2-H3}
\end{figure}


\begin{figure}[ht]
    \centering
    \includegraphics{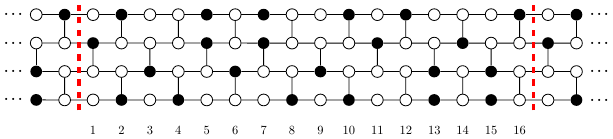}
    \caption{An optimal periodic solution $\mathscr{S}_4$ for the grid 
        $H_4$ with density $d^*(H_4) = 11/32 = 0.34375$.}
    \label{fig:H4}
\end{figure}

\begin{figure}[ht]
    \centering
    \includegraphics{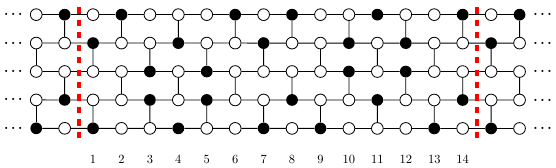}
    \caption{An optimal periodic solution $\mathscr{S}_5$ for the grid 
        $H_5$ with density $d^*(H_5) = 12/35 \approx 0.3428$.}
    \label{fig:H5}
\end{figure}

\section{An integer program to find feasible periodic locating-dominating sets in 
    \texorpdfstring{$H_k$}{Hk}}
\label{sec:ilp}

In this section, we consider the problem of finding, for a fixed integer 
$k\geq 6$, a periodic feasible \lds{} solution for $H_k$.  
For that, once $k$ is fixed, our approach is to fix an even integer $\ell \geq 6$,
and find an optimal \lds{} periodic solution on $H_k$ with period $\ell$ 
(which clearly exists).

To tackle this problem, we formulate it as a 0-1 integer linear
program (ILP, for short). Of course, an optimal solution for a fixed
$\ell$ provides only a feasible solution for $H_k$, and therefore an
upper bound for $d^*(H_k)$.  Our strategy is to consider different
values for $\ell$, and take an optimal solution with the smallest
density found. We note that as $\ell$ increases, the size of the ILP
may become prohibitively large (although its size is polynomial in $k$
and $\ell$). We comment on the values for $\ell$ that we considered
for~$k\in\{7,8\}$, guided by the information we have on a lower bound
for $d^*(H_k)$.

For a fixed pair $k, \ell$, let us denote by $\mathcal{P}_{k,\ell}$ the
ILP that finds a periodic solution for $H_k$ with period~$\ell$.
For that, let us consider an $\ell$-bar $B$ of the grid $H_k$, whose
rows are numbered $\{1, 2, \ldots, k\}$, and columns are numbered
$\{1, 2, \ldots, \ell\}$.  Thus, the vertices of $B$ are pairs
$(i,j)$ with $i\in [k]$ and $j\in [\ell]$. We adopt the convention
that the vertex $(1,1)$ is a vertex of degree~$2$ in $H_k$.
Our task now is to select vertices in $B$ to be part of a
solution that will be a periodic solution of $H_k$, and 
whose pattern is precisely $B$ together with these selected
vertices. 
    
Thus, we must ensure that the locating-dominating properties of the
constructed pattern hold not only when the vertices in $B$ are
tested with respect to their neighbors in $B$, but when we consider their
neighborhoods across the boundaries of $B$, when consecutive copies
of this pattern is repeated side by side to form a solution for~$H_k$.
  
To model this boundary interaction, we add to $B$ the set of edges
$\{(i, 1), (i, \ell)\}$ for each $i \in [k]$. That is, we connect the
vertices that belong to the first and last columns of $B$, and lie in
the same row of $B$.  This operation creates a subgraph $B^+$ which we
call a \emph{circular $\ell$-bar} with $k$ rows.  See in
Figure~\ref{fig:circular_bar} a circular~$6$-bar with $4$-rows, in
which the dashed edges are the ones added to $B$.

\begin{figure}[ht]
    \centering
    \includegraphics{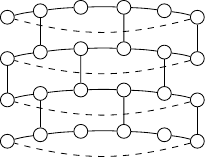}
    \caption{A circular $6$-bar with 4 rows.}
    \label{fig:circular_bar}
\end{figure}


Now, to find a periodic pattern, it suffices to work solely on the
circular $\ell$-bar with $k$ rows. To simplify notation, in the ILP
formulation, we refer only to vertices of $B$, but consider neighbors
with respect to~$B^+$; that is, column indices are interpreted cyclically within
$\{1, \ldots, \ell\}$, so that column $\ell$ is adjacent to column
$1$. In what follows, we make this more precise.

To write the constraints of $\mathcal{P}_{k,\ell}$, let us refer to
$S$ as the set of vertices of $B$ that we want to select. That is, for
each vertex $v \in V(B)$, we associate a binary variable $x_v$ and
impose constraints to guarantee that $x_v = 1$ if and only if
$v \in S$.

First, to impose that $S$ be a dominating set, we add the following
constraint: 
\[
    \sum_{u \in N[v]} x_u \geq 1\;\textnormal{ for each } v \in V(B). 
\]

To simplify notation, from now on, if $F$ is a set of vertices,
instead of $\sum_{v \in F}x_v$, we use the simplified notation
$x(F)$. Thus, the above inequality simplifies to  $x(N[v]) \geq 1$. 
Moreover, we note that, as this inequality is taken for each vertex $v\in V(B)$,
it is important that we take $N[v]$ instead of $N(v)$, so that if
$v$ is in $S$, this condition is automatically satisfied.

Next, to impose that $S$ must satisfy the locating property, that
is, each pair of distinct vertices~$u$ and~$v$ that are not in~$S$
must have distinct neighborhoods in $S$, we add the following
constraint:
\[
    x(N[u] \triangle N[v]) \geq 1\;\textrm{ for all } u,v\in V(B)
    \textrm{ such that dist}(u,v) = 2.
\]

We observe  that this constraint is added regardless of whether $u$ or
$v$ are elements of $S$, and again we consider closed neighborhoods
of $u$ and $v$. Note that, if $u$ or $v$ belongs to $S$, this
constraint is automatically satisfied, since $N[u] \triangle N[v]$
contains $u$ and $v$ (when they are at distance 2). The reader may
verify the validity of this inequality by checking the case in
which $u$ and $v$ belong (resp. do not belong) to the same row.

At this point, it is important to note that the inequality above is
\emph{not valid} when $\dist(u,v)=1$ and $u$ or $v$ belongs to~$S$. 
When both $u$ and $v$ do not belong to $S$, the inequality
is automatically satisfied (because of the first inequality above).
Thus, when $\dist(u,v)=1$, this inequality may not be included. 
We leave it to the reader to verify the above claims and note that there
is no need to consider the inequality corresponding to open
neighborhoods of $u$ and $v$ (as it would be redundant).

At last, we note that, when $\dist(u,v) > 2$, then $N[u]$ and $N[v]$
are disjoint, and hence $N[u] \triangle N[v] = N[u] \cup N[v]$, and,
once again, the domination constraints on $u$ and on $v$ already
guarantee that both $N[u]$ and $N[v]$ contain a vertex of~$S$.
Thus, pairs of vertices at distance greater than $2$ need not 
be considered to verify the locating property of~$S$. 
This justifies the inequality above.

Summarizing the previous discussions, only the first inequality, and
the second inequality restricted to pairs of vertices at distance~$2$
are necessary and sufficient.  Putting together the domination
constraint~(\ref{const:domination}), the locating constraints
(\ref{const:same_row}),~(\ref{const:prev_col})~and~(\ref{const:next_col}),
and the decision constraint~(\ref{const:decision}), we obtain the
0-1 integer linear program $\mathcal{P}_{k,\ell}$ described below.


As the column indices are in $[\ell]= \{1,2,\ldots, \ell\}$, we use congruence 
modulo $\ell$, taking the least positive residue system. 
To simplify notation in the ILP, for column index $j \in [\ell]$ we define
\[
    j^+ \coloneqq (j \bmod \ell) + 1, \qquad
    j^{++} \coloneqq ((j + 1) \bmod \ell) + 1, \qquad
    j^- \coloneqq ((j - 2) \bmod \ell) + 1,
\]
so that the column adjacencies are taken cyclically in $B^+$ (e.g., $\ell^+ = 1$, 
$\ell^{++} = 2$ and $1^- = \ell$). 
Similarly, we define $i^+ \coloneqq i + 1$.

\begin{samepage}
\begin{empheq}[left={(\mathcal{P}_{k,\ell})\quad}]{align}
    \textnormal{minimize} \quad & x\,\big(V(B)\big) & \\
    \textnormal{subject to} \quad 
    & x\,\big(N[v]\big) \geq 1 && \forall v \in V(B),  \label{const:domination} \\
    & x\,\big(N[u] \triangle N[v]\big) \geq 1, \; u=(i,j),\, v=(i,j^{++}) 
        && \forall i\in[k],\,\forall j\in[\ell],  \label{const:same_row} \\
    & x\,\big(N[u] \triangle N[v]\big) \geq 1, \; u=(i,j),\, v=(i^+,j^-) 
        && \forall i\in[k-1],\,\forall j\in[\ell],  \label{const:prev_col} \\
    & x\,\big(N[u] \triangle N[v]\big) \geq 1, \; u=(i,j),\, v=(i^+,j^+) 
        && \forall i\in[k-1],\,\forall j\in[\ell],  \label{const:next_col} \\
    & x_v \in \{0,1\}, && \forall v \in V(B). \label{const:decision} 
\end{empheq}
\end{samepage}

We implemented the integer program $\mathcal{P}_{k,\ell}$ in C++
using the \emph{Gurobi} optimizer.

For $k = 7$ (resp. $k=8$), we considered periods $\ell$ ranging over
the even integers from $6$ to $40$ (resp. from $6$ to $26$).
The experiments were run on a MacBook Air with an 8-core M1
  processor.  For $H_7$, the longest run ($7.08$ hours) occurred for
  $\ell = 40$, and for $H_8$, the longest run ($1.8$ hours) occurred
  for $\ell = 26$.  Among the periods that we considered, the best
solutions for $H_7$ and $H_8$ were found for $\ell = 22$ and
$\ell = 20$, yielding densities $26/77 \approx 0.3376$ and
$27/80 = 0.3375$, respectively.  
These solutions, named $\mathscr{S}_7$ and $\mathscr{S}_8$, are shown in 
Figures~\ref{fig:H7_pl} and~\ref{fig:H8_pl}. 
Both densities are within $1.3\%$ of the optimum, a fact that follows from the 
lower bound $1/3$, which we prove in the next section (see Corollary~\ref{coro:lower-bound}).


\begin{figure}[ht]
    \centering
    \includegraphics{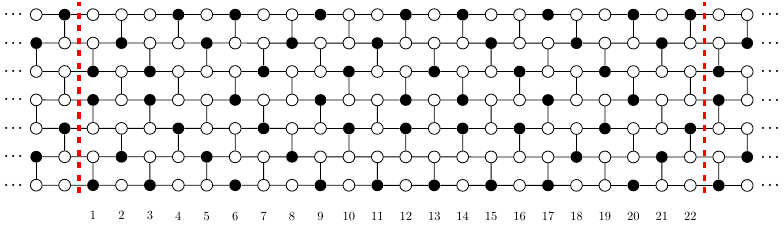}
    \caption{A periodic solution $\mathscr{S}_7$ for $H_7$ with density
    $d(\mathscr{S}_7, H_7) = 26/77 \approx 0.3376$.}
    \label{fig:H7_pl}
\end{figure}
\begin{figure}[ht]
    \centering
    \includegraphics{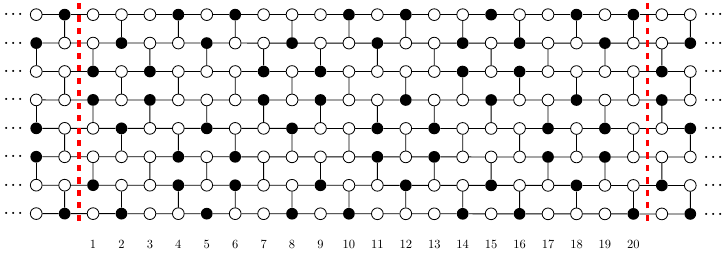}
    \caption{A periodic solution $\mathscr{S}_8$ for  $H_8$ with density
    $d(\mathscr{S}_8, H_8) = 27/80 = 0.3375$.}
    \label{fig:H8_pl}
\end{figure}

For $k = 6$, $k = 9$, and more generally, when $k$ is a multiple of $3$, optimal
solutions for $H_k$ can be constructed by combining solutions for $H_3$.
This construction, together with more general ones and the corresponding density
calculations, will be discussed in the next section.

%

\section{Explicit constructions of locating-dominating sets in hexagonal grids}
\label{sec:stack-hex-grid}

In Section~\ref{sec:min-density-hex-grid}, we showed an optimal
periodic solution for $H_3$ with period~$2$ and density~$1/3$.  This
optimal solution, denoted $\mathscr{S}_3$, is shown in
Figure~\ref{fig:H2-H3}.  Observe that the solution for $H_6$
illustrated in Figure~\ref{fig:H6} is also periodic, has period~$2$
and density~$1/3$. We discuss its optimality later.  This solution for
$H_6$ was obtained by combining two optimal solutions $\mathscr{S}_3$
for $H_3$.  The first~3 rows of~$H_6$ correspond to a copy of
$\mathscr{S}_3$, and the next~3 subsequent rows correspond to another
copy of $\mathscr{S}_3$.  In what follows, we define more formally the
construction of combining periodic solutions of any two grids (not
necessarily with the same period or the same number of rows) to obtain
a periodic solution for a larger grid.

\begin{figure}[ht]
    \centering
    \includegraphics{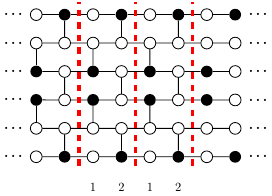}
    \caption{An optimal periodic solution $\mathscr{S}_6$ for $H_6$ with density $1/3$.}
    \label{fig:H6}
\end{figure}


\subsection{Combining periodic solutions of \texorpdfstring{$H_i$}{Hi} and 
\texorpdfstring{$H_j$}{Hj} to obtain a periodic solution 
for~\texorpdfstring{$H_{i+j}$}{H(i+j)}.}

We now formalize and generalize the construction illustrated above to
obtain a  solution for $H_6$.

For  $i,j\geq 2$, let $S_i$ and $S_j$ be periodic {\lds} solutions for the grids $H_i$
and $H_j$, with patterns $P_i$ and~$P_j$ of periods~$p$ and~$q$, respectively.  Let
$t \coloneqq \mathrm{lcm}(p, q)$ be the lowest common multiple of $p$ and $q$. Denote by
$H_i(S_i)$ and $H_j(S_j)$ the grid $H_i$ (resp.  $H_j$) together with the solution $S_i$
(resp.~$S_j$) represented on its vertices.  Considering the definition of $t$, it is
immediate that any set of~$t$ consecutive columns of $H_i(S_i)$ (resp. $H_j(S_j)$) defines
a pattern, say $P^*_i$ (resp.~$P^*_j$), of $H_i$ (resp.~$H_j$) with period $t$.

Let $S^*_i$ and $S^*_j$ be the periodic {\lds} solutions for $H_i$ and $H_j$ defined by
the patterns $P^*_i$ and $P^*_j$, respectively. Clearly, the densities of these patterns
are equal to the densities of the original patterns, namely,
$d(P^*_i, H_i) = d(P_i,H_i)= d(S_i, H_i)$ and $d(P^*_j, H_j) = d(P_j,H_j) = d(S_j, H_j)$.


\textbf{Combining solutions $S^*_i$ and $S^*_j$ to obtain a solution $S^*$ for $H_{i+j}$.}
For that, we simply consider that the first $i$ rows of $H_{i+j}$ are a copy of
$H_i(S^*_i)$, and the subsequent $j$ rows of $H_{i+j}$ are a copy of $H_j(S^*_j)$.  
More formally, the solution $S^*$ of $H_{i+j}$ restricted to the first $i$ rows of $H_{i+j}$
coincides with $S^*_i$, and $S^*$ restricted to the rows from $i+1$ to $i+j$ coincides
with $S^*_j$; that is, $S^* = S^*_i\cup S^*_j$. To refer to the solution obtained by the
procedure we just described, we write that $S^*:= S^*_i \oplus S^*_j$,
$P^*:= P^*_i \oplus P^*_j$ and $H_{i+j}(S^*) := H_{i}(S^*_i) \oplus H_{j}(S^*_j)$.

Observe that we did not impose any restriction regarding the initial columns of the
patterns $P^*_i$ and $P^*_j$ (when $i$ is odd, they will differ because of the structure
of the hexagonal grid, but this occurs naturally).
Note that $H_{i+j}$ has ``vertical'' edges connecting, in the same column, a vertex
in row~$i$ with a vertex in row~$i + 1$ -- vertices that had degree~$2$ in their
original grids $H_i$ and $H_j$. The proof of the next theorem shows that $S^*$
remains an \lds{} despite these new connections.

\begin{theorem}\label{thm:stack_grids} 
    Let $S_i$ and $S_j$ be periodic {\lds} solutions
    for the grids $H_i$ and $H_j$, $i,j\geq 2$, with patterns $P_i$ and $P_j$ of even
    periods~$p$ and $q$, respectively.  Let $P^*_i$ (resp. $P^*_j$) be patterns of period
    $t$, where $t= \mathrm{lcm}(p, q)$, obtained by taking any~$t$ consecutive columns of
    $H_i(S_i)$ (resp. $H_j(S_j)$).  Let~$S^*_i$ and~$S^*_j$ be the periodic solutions for
    $H_i$ and $H_j$ defined by the patterns $P^*_i$ and $P^*_j$, respectively. Let
    $P^*:= P^*_i \oplus P^*_j$ and $S^* := S^*_i\oplus S^*_j$ be as defined above.
    Then, $S^*$ is a periodic {\lds} solution for
    $H_{i+j}$ with pattern $P^*$ of period~$t$, whose density is
    \[
        d(S^*, H_{i+j}) =  d(P^*, H_{i+j}) = \frac{i\cdot d(S_i, H_i) + j\cdot d(S_j, H_j)}{i+j}.
    \]
\end{theorem}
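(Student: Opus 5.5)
The plan is to verify the two \lds{} properties, domination and location, for $S^* = S^*_i\cup S^*_j$ in $H_{i+j}$ by comparing neighborhoods in $H_{i+j}$ with neighborhoods in the original grids. The key observation is that $H_{i+j}$ is the disjoint union of a copy of $H_i$ (rows $1,\dots,i$) and a copy of $H_j$ (rows $i+1,\dots,i+j$), plus the ``vertical'' edges joining rows $i$ and $i+1$. Each added edge joins two vertices that had degree $2$ in their own grid. Hence for $v$ in the top block we have $N_{H_{i+j}}(v) = N_{H_i}(v)\cup E_v$, where $E_v$ is either empty or a single vertex in row $i+1$; symmetrically for the bottom block. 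Periodicity of $S^*$ with pattern $P^*$ and period $t$ is immediate: both $S^*_i$ and $S^*_j$ repeat every $t$ columns. Since $t$ is even, the parity structure of the hexagonal grid is also preserved under a shift by $t$ columns, so the combined pattern is well defined.

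\textbf{Domination.} If $v\notin S^*$, say $v$ is in the top block, then $v\notin S^*_i$. Since $S^*_i$ is an \lds{} of $H_i$, $v$ has a neighbor in $S^*_i$ within $H_i$, and that neighbor remains a neighbor in $H_{i+j}$. The bottom block is symmetric.

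\textbf{Location.} Let $u\neq v$ be outside $S^*$. We split into cases.
\begin{enumerate}[label=(\arabic*)]
\item Both $u,v$ lie in the top block. We have $N_{H_i}(u)\cap S^*_i \neq N_{H_i}(v)\cap S^*_i$, and these are the top-block parts of $N(u)\cap S^*$ and $N(v)\cap S^*$. Since intersecting with the top rows recovers them, the full sets differ. The same argument applies when both lie in the bottom block.
\item $u$ is in the top block and $v$ in the bottom block. Here $N(u)\cap S^*$ contains the nonempty set $N_{H_i}(u)\cap S^*_i$, which lies in the top rows (nonempty by domination in $H_i$). On the other hand, $N(v)\cap S^*$ meets the top rows in at most one vertex, $E_v$. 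So equality would force $N_{H_i}(u)\cap S^*_i = \{w\}$ with $w\in E_v$, and symmetrically $N_{H_j}(v)\cap S^*_j=\{w'\}$ with $w'\in E_u$. Then $w$ is in row $i$ adjacent to $v$ in row $i+1$, and $w'$ is in row $i+1$ adjacent to $u$ in row $i$. The vertical edges form a matching, and $w$, $u$ are both in row $i$ and adjacent. This means $u,w,v,w'$ would form a $4$-cycle $u\,w\,v\,w'$. But $H_{i+j}$ is bipartite with girth $6$, which is a contradiction. Hence $N(u)\cap S^* \neq N(v)\cap S^*$.
\end{enumerate}
Case (2) is the only real obstacle, and the girth argument settles it; I would double-check the adjacency bookkeeping against the edge definition of $\mathcal{G}_H$.

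\textbf{Density.} The pattern $P^*$ consists of $t(i+j)$ vertices containing $|P^*_i\cap S^*_i|+|P^*_j\cap S^*_j| = t\,i\,d(S_i,H_i)+t\,j\,d(S_j,H_j)$ selected vertices. Here we use that the density of a periodic solution equals the fraction of selected vertices in a pattern (Lemma~\ref{lem:SG-property} applied to the partition into $t$-bars, which have bounded diameter). Applying Lemma~\ref{lem:SG-property} again to $H_{i+j}$, with the partition into copies of $P^*$ and $c=c'$ equal to this count, gives the stated formula.
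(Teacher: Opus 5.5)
Your proof is correct, but it checks the locating property by a different route from the paper's. The paper does not argue from the definition. It reuses the reformulation from the ILP $\mathcal{P}_{k,\ell}$: in this triangle-free grid, domination together with $|(\tilde N[u]\triangle\tilde N[v])\cap S^*|\ge 1$ for all pairs at distance $2$ suffices. It then checks only distance-$2$ pairs that touch the border rows $i$ and $i+1$, in three cases. The cross-border case uses the degree alternation in those rows: of two such vertices exactly one has degree $2$ in its original grid, and its dominator inside its own block lies outside the other vertex's closed neighborhood.

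You instead work with open neighborhoods of arbitrary pairs outside $S^*$ and project $N(\cdot)\cap S^*$ onto the two blocks. Same-block pairs are separated by their projections. A cross-block coincidence would force both projections to be singletons supplied by the vertical edges, which gives a $4$-cycle $u\,w\,v\,w'$; this is impossible because $H_{i+j}$ has girth $6$.

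What each approach buys: your argument is self-contained, with no appeal to the distance-$2$ reduction from the ILP discussion. It also proves something more general: gluing two graphs carrying \lds{}s along a matching yields an \lds{} whenever no $4$-cycle uses two matching edges. The paper's argument is more local and hexagon-specific, but it is phrased directly in terms of the constraints of $\mathcal{P}_{k,\ell}$. Your remark that $t$ is even, so the parity structure is preserved and $P^*$ is a well-defined pattern, makes explicit a point the paper leaves implicit. Your density computation is the same as the paper's.
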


\smallskip

\begin{proof}
    Let $S_i$, $S_j$, $S^*$, $P_i$, $P_j$, $P^*_i$, $P^*_j$ and $P^*$ as
    in the statement of the theorem.
    First, we prove that $S^*$ is a periodic \lds{} solution of $H_{i+j}$ with
    pattern $P^*$. 
    First, it is immediate that $S^*$ is a dominating set; so it suffices to 
    verify that $S^*$ satisfies the locating property in $H_{i+j}$.

    Considering the constraints of the ILP presented in the previous
    section, it suffices to verify that all pairs of vertices $(u,v)$
    at distance~2 in $H_{i+j}$ satisfy the inequality 
    $|(\tilde{N}[u]\triangle \tilde{N}[v])\cap S^*|\geq 1$, where
    $\tilde{N}(x)$ refers to the neighborhood of a vertex $x$ in
    $H_{i+j}$.

    In what follows, to simplify the writing, we say that a pair of vertices 
    $(u,v)$ of $H_{i+j}$ satisfies the \emph{locating constraint for $S^*$},
    if $|(\tilde{N}[u]\triangle \tilde{N}[v])\cap S^*|\geq 1$.
    Thus, $S^*$ satisfies the locating property if all pairs $(u,v)$ of vertices
    at distance~2 in $H_{i+j}$ satisfy the locating constraint for $S^*$.

    For pairs of vertices $(u,v)$ of $H_{i+j}$ at distance~$2$, that are not in
    row~$i$ or row~$i+1$, henceforth named \emph{border rows}, it is immediate 
    that they satisfy the locating constraint for $S^*$ (as they satisfy this 
    constraint for $S_i^*$ or $S_j^*$).
    So, we only have to verify whether $(u,v)$ satisfies the locating constraint 
    for $S^*$ when at least one of the vertices $u$ or $v$ is in a border row.

    \textbf{Case 1.} If only $u$ belongs to a border row, assume, without loss
    of generality, that $u$ is in the border row of $H_i$.
    Then,~$v$ also belongs to $H_i$. In this case, clearly the pair $(u,v)$
    satisfies the locating constraint for $S^*$.

    \smallskip

    \textbf{Case 2.} When both $u$ and $v$ belong to the same border row
    (either $i$ or $i+1$). Without loss of generality, consider that $u$
    and $v$ are vertices in row~$i$. Then $u$ and $v$ are vertices of
    $H_i$ with the same degree: either~$2$ or~$3$ (because they are at
    distance~$2$). In both cases, since, in $H_i$, the pair~$(u,v)$ 
    satisfies the locating constraint for $S_i^*$, in  $H_{i+j}$ it 
    satisfies the locating constraint for $S^*$.

    \smallskip

    \textbf{Case 3.}  When $u$ and $v$ belong to different border rows.
    By symmetry, it suffices to analyse the case in which~$u$ is in row~$i$ 
    and~$v$ is in row~$i+1$.
    Since~$u$ and $v$ are at distance $2$ in $H_{i+j}$, they have different 
    degrees in their original grids: if $u$ has degree~$2$ in $H_i$ then $v$ 
    has degree~$3$ in $H_{j}$, and vice-versa. 
    In this case, in $H_{i+j}$ the vertices $u$ and $v$ have a unique common neighbor.
    Since $\tilde{N}[u]\triangle \tilde{N}[v] \supset \{u,v\}$, if~$u$ or~$v$
    belongs to $S^*$, then the pair $(u,v)$ satisfies the locating constraint 
    for~$S^*$. 
    If both~$u$ and~$v$ do not belong to $S^*$, as one of them has degree~$2$ 
    in the corresponding grid, without loss of generality, suppose it is the 
    vertex~$u$,
    then, $u$ has a neighbor in $H_i$ that belongs to $S^*$ but does not belong 
    to $\tilde{N}[v]$ (because $\tilde{N}[v]$ contains only vertices of
    $H_j$).
    Thus, $(u,v)$ satisfies the locating constraint for~$S^*$.

    \smallskip

    From the analysis of the three cases above we conclude that $P^*$ is
    a pattern of an \lds{} of~$H_{i+j}$.
    Now, to conclude the proof of the theorem, it remains to calculate the
    density of the periodic solution $S^*$ of~$H_{i+j}$ defined by the
    pattern $P^*$ (that has  period~$t$).

    For a pattern $P$ of a periodic solution $S^*$, we denote by $|P|$
    the total number of vertices in $P$, and by $|P(S^*)|$ the
    number of vertices of $S^*$ in pattern $P$. Thus,

    \[
        d(P^*, H_{i+j}) = \frac{|P^*(S^*)|}{(i+j)\cdot t} = \frac{|P^*_i(S_i^*)| +
        |P^*_j(S_j^*)|}{(i+j)\cdot t}.
    \]

    \smallskip

    Since $|P^*_i(S_i^*)|= d(P^*_i, H_i) \cdot (i\cdot t) = d(S_i, H_i) \cdot (i \cdot t)$, and
    analogously, $|P^*_j(S_j^*)|= d(S_j, H_j) \cdot (j \cdot t)$, substituting these values in
    the previous equality, we obtain

    \[
        d(P^*, H_{i+j}) = \frac{i\cdot d(S_i, H_i) + j\cdot d(S_j,
        H_j)}{i+j}.
    \]

\end{proof}

In Figure~\ref{fig:H11_stack}, the reader may see that the combined
solution $\mathscr{S}_{11}$ for $H_{11}$ that is obtained by combining the
optimal solution~$\mathscr{S}_3$ for $H_3$ (of period~$2$) and the solution
$\mathscr{S}_8$ for $H_8$ (of period $20$ shown in Figure~\ref{fig:H8_pl}) has
period $t = \mathrm{lcm}(2, 20) = 20$. Note that any $20$ consecutive
columns of the combined solution for $H_{11}$ defines a pattern of
this solution.

\begin{figure}[ht]
    \centering
    \includegraphics{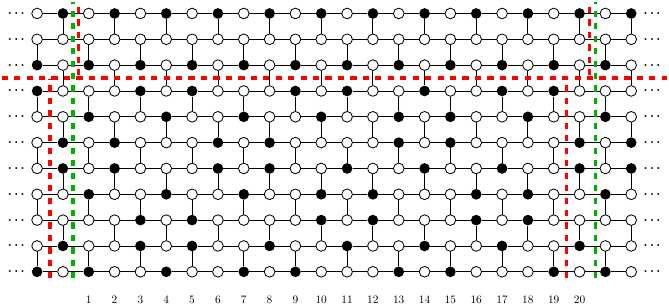}
    \caption{A periodic {\lds} solution $\mathscr{S}_{11} = \mathscr{S}_3 \oplus \mathscr{S}_8$ 
        for $H_{11}$ with period~$20$ and density 
        $d(\mathscr{S}_{11},H_{11}) = 37/110 \approx 0.3363$.}
    \label{fig:H11_stack}
\end{figure}

\bigskip

At this point it is important to mention the following result obtained
by Honkala and Laihonen~\cite{HonkalaL06} on the infinite hexagonal
grid $\mathcal{G}_H$. This result has many consequences in our
further studies.

\medskip

\begin{theorem} [Honkala and Laihonen~\cite{HonkalaL06}] \label{thm:min_infty_hex_grid}
    The minimum density of a locating-dominating set on the infinite hexagonal 
    grid $\mathcal{G}_H$ is $d^*(\mathcal{G}_H) = 1/3$.
\end{theorem}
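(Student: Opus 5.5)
The plan is to prove the two inequalities $d^*(\mathcal{G}_H)\le 1/3$ and $d^*(\mathcal{G}_H)\ge 1/3$ separately, with the lower bound obtained by a share (discharging) argument.

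\textbf{Upper bound.} I will exhibit an explicit periodic \lds{} of $\mathcal{G}_H$ of density $1/3$. The natural candidate comes from stacking copies of $\mathscr{S}_3$. Iterating the combination of Theorem~\ref{thm:stack_grids} gives periodic \lds{}s $\mathscr{S}_3\oplus\cdots\oplus\mathscr{S}_3$ of $H_{3m}$ with density $1/3$ and period $2$. The proof of Theorem~\ref{thm:stack_grids} only uses local checks at border rows, so the same argument applies verbatim to the bi-infinite stacking indexed by $\mathbb{Z}$. This yields a doubly periodic set of $\mathcal{G}_H$ whose fundamental domain is a $3\times 2$ block containing exactly $2$ codewords. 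Because $\mathcal{G}_H$ has polynomial growth and the set is doubly periodic, an analogue of Lemma~\ref{lem:SG-property} (partitioning into these $3\times 2$ blocks) shows its density is exactly $1/3$.

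\textbf{Lower bound.} I will use a discharging argument on an arbitrary \lds{} $S$ of $\mathcal{G}_H$. Each vertex $v\notin S$ gives charge $1/|N(v)\cap S|$ to each of its neighbours in $S$. Then $s(c)$, the share of a codeword $c\in S$, is defined as $1+\sum_{v\in N(c)\setminus S}1/|N(v)\cap S|$, i.e.\ the total charge $c$ receives counting itself. It then suffices to show that the average share of the codewords is at most $3$. Averaging over balls, using that the boundary-to-volume ratio of balls tends to $0$, then gives density at least $1/3$.

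For the local count, fix $c\in S$ and let $a$ be the number of its neighbours outside $S$ that are dominated only by $c$. By the locating property, at most one vertex outside $S$ has $N(v)\cap S=\{c\}$, so $a\le 1$. Every other non-code neighbour contributes at most $1/2$, since $\mathcal{G}_H$ is $3$-regular and each such neighbour has at least two code neighbours. Hence
\[
s(c)\le 1+a+\tfrac{1}{2}(3-a).
\]
When $a=0$ this gives $s(c)\le 5/2$. When $a=1$ it gives $s(c)\le 3$, with equality only if $c$ has no code neighbour and its other two neighbours each have exactly two code neighbours.

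\textbf{Main obstacle.} The crude bound is already $s(c)\le 3$ in all cases, so the average share is at most $3$ and the lower bound $1/3$ follows. The delicate step I expect is verifying the case $a=1$ carefully, in particular that $s(c)$ cannot exceed $3$ there. If the count were to give something larger than $3$ (for instance if I had mis-handled codewords adjacent to other codewords), I would repair it by a second discharging round, shifting excess share from codewords with share above $3$ to nearby codewords with share at most $5/2$, found within distance $2$.
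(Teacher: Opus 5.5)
Your proof is correct. It is also a genuinely different route from the paper's, since the paper gives no proof of this statement: it simply imports it from Honkala and Laihonen and then uses it as the engine for Corollary~\ref{coro:lower-bound}(a).

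\textbf{Lower bound.} Your argument is the classical Slater counting bound $|S|\ge 2n/(\Delta+3)$ specialized to $\Delta=3$, phrased as shares. At most one non-codeword has $N(v)\cap S=\{c\}$, each other non-code neighbour sends at most $1/2$, and code neighbours send nothing. So $s(c)\le 1+a+(3-a)/2\le 3$ holds outright, and your ``main obstacle'' paragraph and the fallback second discharging round are unnecessary. For the averaging step it suffices to note that every vertex of $N_{r-1}[w]$ sends its whole charge into $N_r[w]$. Hence $3|S\cap N_r[w]|\ge |N_{r-1}[w]|$, and the Slow Growth property finishes.

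\textbf{Upper bound.} Here you invert the paper's logic. Instead of passing from $\mathcal{G}_H$ down to $H_k$, you use Theorem~\ref{thm:stack_grids}, extended to a bi-infinite stack of copies of $\mathscr{S}_3$, to build a density-$1/3$ \lds{} of $\mathcal{G}_H$. Consecutive bands must be shifted by one column, since $3$ is odd; $(i,j)\mapsto(i+3,j+1)$ is an automorphism. The extension is legitimate because a path of length $2$ uses at most one vertical edge. So every pair at distance $2$ lies in two consecutive rows, and the three local cases of that proof cover all pairs.

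\textbf{What your route buys.} Beyond being self-contained, your share bound uses only maximum degree $3$, so it applies verbatim to every $H_k$. It gives $d^*(H_k)\ge 1/3$ directly, without the detour through $\mathcal{G}_H$. It also avoids relying on the omitted fact that $H_k$ admits a periodic optimal solution, which the paper's proof of Corollary~\ref{coro:lower-bound}(a) implicitly needs.
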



\smallskip

\begin{corollary} \label{coro:lower-bound}
    The following results hold for $H_k$.
    \begin{enumerate}[label=\alabel]
        \item For all $k\geq 1$, we have $d^*(H_k) \geq 1/3$. 
        \item For all $m\geq 1$, we have $d^*(H_{3m}) = 1/3$.
    \end{enumerate}
\end{corollary}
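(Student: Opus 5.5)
For part~(a), the plan is to transfer the problem to the infinite grid $\mathcal{G}_H$ and apply Theorem~\ref{thm:min_infty_hex_grid}. For $k=1$, $H_1$ is the infinite path, whose minimum {\lds} density is $2/5>1/3$~\cite{BertrandCHL04}, so assume $k\geq 2$. By Theorem~\ref{thm:min_density}(b), $H_k$ has an optimal {\lds} $S$ that is periodic. By the Remark in Section~\ref{sec:min-density-hex-grid}, we may take its period $p$ to be even. I would then stack infinitely many copies of $H_k(S)$ vertically to cover all of $\mathbb{Z}\times\mathbb{Z}$, exactly as in the $\oplus$ construction. Copy number $r$ occupies rows $rk+1,\dots,rk+k$.

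When $k$ is odd, the parity $(-1)^{i+j-1}$ that decides the vertical edges flips from one copy to the next. The rows of copy $r$ then form a copy of $H_k$ only after a horizontal shift by one column. So in odd-indexed copies I would place the correspondingly shifted set $S$, which is legitimate because a column shift is an automorphism of the strip. Call the resulting set $\hat S\subseteq V(\mathcal{G}_H)$.

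Next I check that $\hat S$ is an {\lds} of $\mathcal{G}_H$. Domination is inherited from each copy. For the locating property, I would reuse the reduction from Section~\ref{sec:ilp}: it suffices to check the constraint $|(N[u]\triangle N[v])\cap\hat S|\geq 1$ for all pairs $u,v$ at distance~$2$. Any such pair involves at most two consecutive copies. The three-case analysis in the proof of Theorem~\ref{thm:stack_grids} is purely local to the two border rows between adjacent copies. It therefore applies verbatim to every interface of the infinite stack, and each pair is handled by one of Cases~1--3.

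Then I compute the density of $\hat S$ in $\mathcal{G}_H$. The set $\hat S$ is doubly periodic, with a fundamental domain of $p\times 2k$ vertices (two stacked copies of the pattern) containing exactly $2pk\,d(S,H_k)$ elements of $\hat S$. The ball $N_r(w)$ in $\mathcal{G}_H$ has $\Theta(r^2)$ vertices. It can be covered by fundamental domains up to $O(r)$ boundary domains, so the ratio $|N_r(w)\cap\hat S|/|N_r(w)|$ tends to $d(S,H_k)$ for every $w$. Hence $d(\hat S,\mathcal{G}_H)=d(S,H_k)=d^*(H_k)$. Theorem~\ref{thm:min_infty_hex_grid} then gives $d^*(H_k)\geq 1/3$. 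I expect this density step to be the main obstacle. Lemma~\ref{lem:SG-property} does not apply here, since $\mathcal{G}_H$ lacks Slow Growth, so the boundary estimate for balls in the hexagonal metric must be done by hand. A bounded-diameter-tile argument should suffice: every tile meeting $N_r(w)$ but not contained in it lies within a fixed distance of the sphere of radius $r$, and that annulus has $O(r)$ vertices.

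For part~(b), the lower bound comes from part~(a). For the upper bound, I apply Theorem~\ref{thm:stack_grids} inductively to $m$ copies of the optimal period-$2$ solution $\mathscr{S}_3$ of density $1/3$: $\mathscr{S}_3\oplus\cdots\oplus\mathscr{S}_3$. At each step both patterns have period $2$, so $t=2$, and the density formula gives $\frac{3(m-1)\cdot\frac13+3\cdot\frac13}{3m}=\frac13$. Hence $d^*(H_{3m})\leq 1/3$, and combining the two bounds yields equality. The case $m=1$ is just $d^*(H_3)=1/3$ from the earlier theorem.
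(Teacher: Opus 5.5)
Your proposal is correct and follows the paper's proof. For (a) you stack infinitely many copies of a periodic solution of $H_k$ into an {\lds} of $\mathcal{G}_H$ of the same density and invoke Theorem~\ref{thm:min_infty_hex_grid}; for (b) you stack copies of $\mathscr{S}_3$ via Theorem~\ref{thm:stack_grids}. Your version also fills in details the paper leaves implicit: the column shift for odd $k$, the density computation in $\mathcal{G}_H$, and the existence of a periodic optimum.

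Two small corrections. First, $\mathcal{G}_H$ \emph{does} have the Slow Growth property: its balls grow quadratically, so $|N_{r+1}[s]|/|N_r[s]|\to 1$. Hence Lemma~\ref{lem:SG-property}, applied to the $p\times 2k$ fundamental domains, gives the density at once. Second, for $k=2$ a pair at distance $2$ lying in the two rows of a single copy is not literally one of Cases~1--3. It is handled by noting that closed neighborhoods restricted to that copy are unchanged, or you can simply use $d^*(H_2)=3/8$ directly, as the paper does for $k\le 3$.
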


\begin{proof}
  (a) We have proved that $d^*(H_3)= 1/3$ and $d^*(H_2)= 3/8 = 0.375$.
  Moreover, it is known that $d^*(H_1)= 2/5$~\cite{BertrandCHL04}. If
  for a fixed $k\geq 4$, a grid $H_k$ admitted a periodic {\lds}
  solution with density smaller than~$1/3$, then by
  Theorem~\ref{thm:stack_grids} the combination of infinitely many
  periodic solutions for $H_k$ would generate an \lds{} of
  $\mathcal{G}_H$ with density smaller than $1/3$, contradicting
  Theorem~\ref{thm:min_infty_hex_grid}.

(b) Let $m\geq 2$. By Theorem~\ref{thm:stack_grids}, we have
  that  $H_{3m}$ can be obtained by starting with the optimal
  solution $\mathscr{S}_3$ (for $H_3$) and combining $m-1$ times the partially
obtained solutions with another optimal solution
$\mathscr{S}_3$. 
Since $d^*(H_3) = 1/3$, by Theorem~\ref{thm:stack_grids}, we obtain a solution
for $H_{3m}$ with density $1/3$. 
By the previous result (a), we conclude that $d^*(H_{3m}) = 1/3$.
\end{proof}

\subsection{Constructions of feasible periodic solutions for
  \texorpdfstring{$H_k$}{Hk}, for all \texorpdfstring{$k\geq 10$}{k >=
    10}.}

Since Theorem~\ref{thm:stack_grids} provides a systematic method to
construct locating-dominating sets for larger grids, using the
periodic solutions we know for $H_k$ when
$k \in \{3, \ldots, 8\} \cup\{3m: m\geq 2\}$, we are able to construct
periodic solutions for $H_k$, for all $k\geq 10$. As we may combine
known solutions in many different ways, the natural question that
arises is: which combinations should we take to get better solutions?

Considering the solutions that we obtained for $H_k$,
$k \in \{2, \ldots, 8\}$, we observe that the optimal density is achieved when 
$k$ is a multiple of $3$. 
Moreover, for $k=7$ and $k=8$ the solutions $\mathscr{S}_7$ and $\mathscr{S}_8$ 
that we obtained (solving ILPs) are quasi-optimal (or possibly optimal): 
their densities are the closest ones to the lower bound of $1/3$. 
We verified that the density of the solution $\mathscr{S}_7$ 
(resp. $\mathscr{S}_8$) is better than the density obtained by the combined solutions
$\mathscr{S}_2\oplus \mathscr{S}_5$ or $\mathscr{S}_3\oplus \mathscr{S}_4$ 
(resp. $\mathscr{S}_2\oplus \mathscr{S}_6$ or $\mathscr{S}_3\oplus \mathscr{S}_5$).

Thus, to construct locating-dominating sets for $H_k$, $k\geq 10$, when $k$ is not a
multiple of $3$, our strategy is to combine the largest possible number of optimal
solutions $\mathscr{S}_3$ for $H_3$ with the solution $\mathscr{S}_7$ or $\mathscr{S}_8$,
depending on whether $ k=1 \bmod 3$ or $k=2\bmod 3$.

Concretely, when $k= 3m+1$, $m \geq 3$, since $3m + 1 = 3(m - 2) + 7$,
we combine $m - 2$ copies of the optimal solution $\mathscr{S}_3$ for $H_3$
(Figure~\ref{fig:H2-H3}) to obtain an optimal solution for
$H_{3(m-2)}$, and then we combine this optimal solution with a copy of
the solution $\mathscr{S}_7$ of $H_7$ (Figure~\ref{fig:H7_pl}).
By Theorem~\ref{thm:stack_grids}, this combination yields an {\lds} for
$H_{3m+1}$ with density $(11m + 4)/ (33m +11)$, obtained with the
formula given in Theorem~\ref{thm:stack_grids}, taking $i=3(m-2)$,
$j=7$, $d(\mathscr{S}_3, H_3) = 1/3$ and $d(\mathscr{S}_7,H_7) = 26/77$.

Similarly, when $k= 3m+2$, $m \geq 3$, since $3m+2 =3 (m-2) + 8$, we
proceed analogously to the previous case, use the solution $\mathscr{S}_8$ for
$H_8$ (Figure~\ref{fig:H8_pl}) with density $27/80$ and obtain a
solution for $H_{3m+2}$ with density $(10m + 7)/(30m +20)$.

Note that the combined solution when $k= 3m+1$ (resp. $k=3m+2$),
$m\geq 3$, has period $22$ (resp.~$20$). Since $\mathscr{S}_3$ has
period~$2$, each of these combined solutions has a very short
description.

We summarize the results of this section in the following theorem.

\begin{theorem}
\label{thm:upperbounds}
Let $H_k$ be the infinite hexagonal grid with $k$ rows. For each
$k\geq 1$, the following are the optimal values or the best upper
bounds known for  $d^*(H_k)$.

\begin{enumerate}[label=\alabel]
    \item $d^*(H_1) = 2/5$, $d^*(H_2) = 3/8 = 0.375$, $d^*(H_4) = 11/32 = 0.34375$,
        $d^*(H_5) = 12/35 \approx 0.3428$, $d^*(H_7) \leq  d(\mathscr{S}_7,H_7) = 26/77$, and
        $d^*(H_8) \leq d(\mathscr{S}_8,H_8) = 27/80$.

    \item For $k = 3m$, $m \geq 1$, we have that $d^*(H_k) = d^*(H_{3m})= 1/3$.

    \item \label{item:3m+1} For $k = 3m + 1$, $m \geq 3$, we have that
        $d^*(H_k) = d^*(H_{3m+1}) \leq (11m + 4)/ (33m + 11) $.  
        \rm{(}Solution obtained using combinations based on the equality $3m + 1 = 3(m - 2) + 7$.\rm{)}
        
    \item \label{item:3m+2} For $k = 3m + 2$, $m \geq 3$, we have that
        $d^*(H_k) = d^*(H_{3m+2}) \leq    (10m + 7)/ (30m +20)$  $\quad \qquad$ 
        (Solution obtained using combinations based on the equality  $3m + 2 = 3(m - 2) + 8$.)
\end{enumerate}
\end{theorem}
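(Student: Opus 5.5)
The statement is a summary, so the plan is to assemble each item from results already established and check the two density formulas by direct computation.

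For item (a), I would cite three sources. The value $d^*(H_1)=2/5$ is the known result for the infinite path~\cite{BertrandCHL04}, since $H_1$ is isomorphic to it. The values for $H_2$, $H_4$ and $H_5$ come from the theorem certifying the optimality of the solutions in Figures~\ref{fig:H2-H3}, \ref{fig:H4} and~\ref{fig:H5}; that theorem rests on Theorem~\ref{thm:min_density} together with the minimum mean cycle computation of algorithm $\mathbf{A_k}$. The bounds for $H_7$ and $H_8$ are the densities of the feasible periodic solutions $\mathscr{S}_7$ and $\mathscr{S}_8$ found with $\mathcal{P}_{7,22}$ and $\mathcal{P}_{8,20}$. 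Any feasible {\lds} bounds $d^*$ from above, and Lemma~\ref{lem:SG-property}, applied to the partition into pattern copies, gives their densities as $26/77$ and $27/80$.

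Item (b) is exactly Corollary~\ref{coro:lower-bound}(b). The lower bound $1/3$ comes from part (a) of that corollary via Theorem~\ref{thm:min_infty_hex_grid}, and the matching upper bound comes from stacking $m$ copies of $\mathscr{S}_3$ using Theorem~\ref{thm:stack_grids}.

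For items (c) and (d), I would proceed in two steps.
\begin{enumerate}[label=\alabel]
\item Build an optimal solution for $H_{3(m-2)}$, where $m\ge 3$ ensures $3(m-2)\ge 3$. This is $m-2$ stacked copies of $\mathscr{S}_3$: period $2$, density $1/3$.
\item Apply Theorem~\ref{thm:stack_grids} once more with $i=3(m-2)$ and $j=7$ (resp.\ $j=8$). Both periods are even ($2$ and $22$, resp.\ $2$ and $20$), so the hypothesis holds, and $t=22$ (resp.\ $20$).
\end{enumerate}
The density formula gives
\[
\frac{3(m-2)\cdot\frac13+7\cdot\frac{26}{77}}{3m+1}=\frac{(m-2)+\frac{26}{11}}{3m+1}=\frac{11m+4}{11(3m+1)}=\frac{11m+4}{33m+11},
\]
and similarly
\[
\frac{(m-2)+8\cdot\frac{27}{80}}{3m+2}=\frac{(m-2)+\frac{27}{10}}{3m+2}=\frac{10m+7}{10(3m+2)}=\frac{10m+7}{30m+20}.
\]
Since $d^*(H_k)$ is an infimum over all {\lds}s, these feasible densities are upper bounds.

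There is no genuine obstacle here. The only points needing care are, first, confirming that the hypotheses of Theorem~\ref{thm:stack_grids} hold, namely $i,j\ge 2$ and even periods; this is why the statement requires $m\ge 3$, since for $m=2$ the stack $3(m-2)+7$ would have no $H_3$ rows at all. Second, the arithmetic above has to be checked.
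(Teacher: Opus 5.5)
Your proposal is correct and follows the paper's own argument. The theorem is a summary justified by the preceding text, which cites the computed optima and Corollary~\ref{coro:lower-bound}. It then stacks $m-2$ copies of $\mathscr{S}_3$ with $\mathscr{S}_7$ or $\mathscr{S}_8$ via Theorem~\ref{thm:stack_grids} (with $i=3(m-2)$, $j\in\{7,8\}$), which gives exactly the densities $(11m+4)/(33m+11)$ and $(10m+7)/(30m+20)$ you computed. You also check the hypotheses explicitly ($i\ge 2$ forcing $m\ge 3$, and even periods $2$, $22$, $20$), which the paper does not spell out.
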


\medskip

    We conclude this section observing that the solutions $\mathscr{S}_7$ for $H_7$ 
    and $\mathscr{S}_8$ for $H_8$ are within $1.3\%$ of the optimum, and for all 
    the other cases the solutions for the corresponding grid $H_k$ (given by the 
    above theorem) are either optimal or within $1\%$ of the optimum. 
    Moreover, when~$k$ tends to infinity, the corresponding densities tend to $1/3$.


For $k < 10$, an optimality proof is missing only when $k = 7$ and $k = 8$. 
Therefore, an extra effort was made for these two cases. First, we used our 
ILP formulation to search over a wide range of periods, and the solutions 
$\mathscr{S}_7$ and $\mathscr{S}_8$ were the best we could find. Second, 
we used the configuration digraph approach restricted to low-density barcodes, 
and both of these solutions were recovered. Although neither approach certifies 
optimality, these computations provide strong evidence that the densities we 
obtained for $H_7$ and $H_8$ may be optimal.

With these solutions, we obtained high-quality, concisely described solutions 
for~$H_{3m+1}$ and~$H_{3m+2}$ as provided by~\ref{item:3m+1} and~\ref{item:3m+2} 
of Theorem~\ref{thm:upperbounds}. Driven by mathematical curiosity to improve 
these bounds, and by an analysis of the density patterns of these solutions 
as a function of $k$, we arrived at the following conjecture:

\begin{conjecture}\label{conj:exact-density}
    For every $k\ge1$,
    \[
        d^*(H_k)=
        \begin{cases}
            1/3,               & k \equiv 0 \pmod 3,\\
            1/3 + 1/(3k(k+4)), & k \equiv 1 \pmod 3,\\
            1/3 + 1/(3k(k+2)), & k \equiv 2 \pmod 3.
        \end{cases}
    \]
\end{conjecture}

The conjecture correctly reproduces all optimal values established in this paper, 
as well as the best solutions found computationally for $H_7$ and $H_8$. As 
an additional validation, we computed an optimum candidate for $H_{10}$ 
using both the ILP formulation and the reduced configuration digraph approach. 
Both approaches produced a periodic solution of density $47/140$, matching 
exactly the value predicted by the conjecture. We were unable to perform 
analogous computations for larger values of~$k$ because the size of the 
configuration digraph grows exponentially with~$k$.

We believe that proving or disproving this conjecture is a challenging open problem.



\section{Concluding remarks}

Our investigation into the {\MinDen} {\lds} problem on infinite
hexagonal grids $H_k$ has yielded tight bounds. Specifically, we
established solutions for all $k$ within $1.3\%$ of the optimum,
showing that they admit a concise description. The following statement
summarizes our core structural finding: for each $k \geq 10$, we can
specify exactly which two solutions from
$\{\mathscr{S}_3, \mathscr{S}_7, \mathscr{S}_8\}$ must be combined ---
in a straightforward manner --- to construct a concise solution
for~$H_k$ that is within $1\%$ of the optimum.

A preliminary version of this work, containing only a subset of the
current results and omitting proofs, appeared as an extended abstract
in~\cite{GomesW25Procedia}, a volume dedicated to LAGOS 2025.
Crucially, the results presented in Section~\ref{sec:ilp} regarding
the ILP formulation are substantially different from those in the
preliminary version.

The literature contains numerous results on the minimum density of
well-studied codes and sets in both infinite regular grids and
infinite strips of finite width. For interested readers, a highly
valuable resource is the comprehensive and continuously updated online
bibliography on locating-dominating sets, identifying codes, and
related topics for finite and infinite graphs, currently maintained by
Jean~\cite{Jean24} as a continuation of the foundational work by
Lobstein.

\bibliographystyle{abbrv_networks}
\bibliography{lds-Hk}

\medskip

\hrule

\end{document}